\documentclass{article}
\usepackage{amssymb,amsmath}
\usepackage{amsfonts}
\usepackage{latexsym}
\newtheorem{Lem}{Lemma}
\newtheorem{The}{Theorem}
\newtheorem{Pro}{Proposition}
\newtheorem{Rem}{Remark}
\newtheorem{Cor}{Corollary}
\newtheorem{Exa}{Example}

\newcommand{\RR}{\mathbb R}
\newcommand{\proof}{{\em Proof. }}
\newcommand{\cqfd}{\mbox{}\nolinebreak\hfill\rule{2mm}{2mm}\medbreak\par}

\title{\bf $\bf{S^1}$-index theory \\ for the Lorentz force equation}

\author{CRISTIAN BEREANU {\small AND} ALEXANDRU P\^IRVUCEANU
}

\date{}

\begin{document}
	\maketitle
	
	{\noindent {\bf Abstract.} In this paper we prove that the $S^1$-invariance of the Poincar\'e action functional associated to the Lorentz force equation gives the existence of multiple critical points which are periodic solutions with a fixed period. To do this, we prove an  abstract multiplicity result which is based upon the Lusternik-Schnirelman method with the $S^1$-index. The corresponding result in the context of the Fadell-Rabinowitz index is proved  in  Ekeland and Lasry (Ann. Math., 112 (1980)).  The  main feature of our abstract result is that it allows us to  consider nonsmooth functionals satisfying only a weak compactness condition well adapted to the Poincar\'e  functional.}
	
	\bigskip
	{\noindent \bf Mathematics Subject Classification (2010)}: 58E05; 58E35; 34C25; 83A05; 70H40.
	
	\section{Introduction}
	In what follows $\RR^3$ is
	endowed with the Euclidean scalar product $``\cdot"$ and with the
	Euclidean norm $``| \cdot |".$
	Let $T>0$ be a fixed period.
	Let   $V:[0,T]\times \RR^3\to\RR$ and  $W:[0,T]\times\RR^3\to\RR^3$ be two $C^1$-functions.
	In this paper we consider the Lorentz force equation (LFE) with the  electric potential  $V$ and the magnetic potential $W,$ namely 
	$$
	\left(\frac{q'}{\sqrt{1-|q'|^2}}\right)' = E(t,q)+q'\times B(t,q),
	$$
	where
	$$
	E=-\nabla_q V-\frac{\partial W}{\partial t},\qquad B=\mbox{curl}_q\, W
	$$
	are the electric and magnetic fields respectively, and $E+q'\times B$ is the well known Lorentz force. By a {\bf solution} $q$ of the LFE we mean a function
	$q=(q_1,q_2,q_3):[0,T]\to\RR^3$ of class $C^2$ such that $|q'(t)|<1$ for
	all $t\in [0,T]$ and which verifies the equation. In what follows we consider {\bf $T$-periodic solutions}, that is, solutions $q$ such that 
	$$
	q(0)=q(T), \quad q'(0)=q'(T).
	$$ 
	Following Feynman \cite{Fey} (see also \cite{LL}), the above equation  is  the relativistically correct equation of motion for a single charge in the fields $E$ and $B.$ The Lorentz force equation is first introduced by Lorentz \cite{Lor1} and Poincar\'e \cite{Po2}. For more details about the Lorentz force equation, see for example Lorentz's paper \cite{Lor2}. On the other hand, in the Conclusions Section from Damour's paper \cite{Da} we can read that ``one of the most important  advances made by  Poincar\'e \cite{Po2} is the relativistic electron Lagrangian 
	\begin{eqnarray*}
		L_{\mbox{electron}} = -m_{\mbox{electron}} c^2 \sqrt{1 - \frac{\bf{v}^2}{c^2}},
	\end{eqnarray*}
	where
	$$m_{\mbox{electron}} = \frac{4}{3}\frac{E_{\mbox{em}}}{c^2}."$$
	Concerning the Lorentz force equation and the above   Lagrangian introduced by Poincar\'e we can read in \cite{Fey} that:  ``The formula in this case of relativity is the following:
	\begin{eqnarray*}
		S = -m_0c^2\int_{t_1}^{t_2}\sqrt{1 - \frac{v^2}{c^2}}dt - q\int_{t_1}^{t_2}[\phi(x,y,z,t) - {\bf v}\cdot A(x,y,z,t)]dt
	\end{eqnarray*}
	[...] This action function gives the complete theory of relativistic
	motion of a single particle in an electromagnetic field."
	
	Next, it is proved in Theorem 6  \cite{ABT}  that a function $q:[0,T]\to\RR^3$ is a $T$-periodic solution of the Lorentz force equation with the electric potential $V$ and the magnetic potential $W$ if and only if $q$ is a $T$-periodic  Lipschitz function with $||q'||_{\infty}\leq 1$ and $q$ is a solution of the variational inequality 
	\begin{eqnarray*}
		\int_0^T[\sqrt{1-|q'|^2} - \sqrt{1 - |\varphi'|^2}]dt &+&\int_0^T [\mathcal E(t,q,q')- \nabla_qV(t,q)] \cdot (\varphi -q) dt
		\\&+&
		\int_0^T  W(t,q)\cdot (\varphi' - q')dt \geq 0 
	\end{eqnarray*}
	for all $T$-periodic Lipschitz functions $\varphi:[0,T]\to\RR^3$ such that $||\varphi'||_{\infty}\leq 1,$ where $\mathcal E: [0,T]\times\RR^3\times\RR^3\to\RR^3$ is given by
	\begin{equation*}
		\mathcal E(t,q,p) = (p\cdot D_{q_1}W(t,q), p\cdot D_{q_2}W(t,q),p\cdot D_{q_3}W(t,q)).
	\end{equation*}
	This means more or less that $0\in\partial S(q),$ or $q$ is a critical point of the Poincar\'e action functional $\mathcal I_*$ associated to the Lorentz force equation with $T$-periodic boundary conditions (see Theorem \ref{criticsol}).  
	
	In this paper we consider autonomous potentials $V$ and $W,$ i.e. $V:\RR^3\to\RR$ and $W:\RR^3\to\RR^3,$ so both potentials are independent of the time variable. The main remark is that in  this case the action functional $\mathcal I_*$ is $S^1$-invariant.  Our main result about the Lorentz force equation is roughly speaking as follows (see Theorem \ref{theappl}). First, we assume that $V$ is of class $C^2,$ $V(0)=0,$ $V> 0$ and $V'\ne 0$ on $\RR^3\setminus \{0\},$ and at infinity one has that
$$\lim_{|q|\to\infty}V(q) = l^*>0.$$
 Regarding the behaviour of the electric potential $V$ around the origin, we assume that for some positive integer $m$ and for some $\lambda>0$ one has $$V(q)\geq\lambda |q|^2$$ for all $q$ around the origin. Moreover, we only consider electric potentials $V$ such that $V''$ is bounded. 
 For the magnetic potential $W$, we assume that $W$ is of class $C^2$ with $W, W', W''$ bounded on $\RR^3$. Then, we prove that there exists $\Lambda_m>0$ such that if $\lambda$ introduced above is such that $\lambda\geq\Lambda_m,$ then $\mathcal I_*$ has at least $3m$ critical orbits at negative levels which are $2\pi$-periodic solutions of the Lorentz force equation. The constant $\Lambda_m$ quantifies the interaction of the electric potential $V$ with the magnetic potential $W.$ 
	
	To prove that the Poincar\'e action functional has multiple critical orbits at negative levels, we develop a  Lusternik-Schnirelman theory with the $S^1$-index for nonsmooth functionals. First, we prove our abstract result for smooth functionals satisfying only a weak Palais-Smale compactness condition (Theorem \ref{lssmooths1}). In this smooth context our result is a generalization of the classical Lusternik-Schnirelman method with the $S^1$-index or with the Fadell-Rabinowitz index (see, for example, Theorem 1 \cite{EkeLasam}, Theorem 6.1 \cite{MawWil}, Proposition 10 \cite{ekebook}). The main tool in the proof of our Theorem \ref{lssmooths1} is Ghoussoub's location theorem \cite{ghojram, ghobook, ekebook}. To pass from the smooth case (Theorem \ref{lssmooths1}) to the nonsmooth case (Theorem \ref{lsnonsmooths1}), we use the Ekeland-Lasry regularization procedure (Lemma 7 in \cite{EkeLasam}).
	For other results about periodic solutions of the Lorentz force equation via the Lusternik-Schnirelman method applied to the Poincar\'e action functional, see \cite{ABT2, Ber, BDPasnsp, BDPprep}. In the newtonian situation, for results concerning periodic solutions of nonlinear bounded perturbations of the operator $u\mapsto u''$ see for example \cite{ambzelampa}.
	For a good introduction to the Lusternik-Schnirelman theory, see the monographs \cite{ambrosettimalchiodibook}-Chapter 9,  \cite{ekebook}-Chapter 5, \cite{MawWil}-Chapter 6,  \cite{ghobook}-\mbox{Chapter 7}.
	
	The paper is organized as follows. In Section 2 we prove our abstract result (Theorem \ref{lssmooths1}) . In Section 3 we introduce the Poincar\'e action functional on the Hilbert space $H^1_T$ and prove the suitable compactness condition in the context of $H^1_T,$ that is, Proposition \ref{pslfe}. In Section 4 we prove the main result concerning periodic solutions with a fixed period of the Lorentz force equation (\mbox{Theorem \ref{theappl}}) that was described above.
	
	{\bf Acknowledgements.} We are grateful to the anonymous reviewers for their important remarks and suggestions
	which considerably improve the quality of the paper.
	
	\section{$S^1$-invariant nonsmooth functionals}
	In this section $(X, \langle\cdot,\cdot\rangle)$ is a real Hilbert space and $(Y, \|\cdot\|_Y)$ is a real Banach space such that $X\subset Y$ and the canonical injection $i:(X,\|\cdot\|_X)\to 
	(Y,\|\cdot\|_Y)$ is a compact operator. Next, we consider the functional $\mathcal I: X\to (-\infty, \infty]$ satisfying the following hypothesis:
	\begin{center}
		$(H)$ \ $\mathcal I = \Psi + \mathcal F,$ where $\mathcal F\in C^1(X,\RR)$ and $\Psi:X\to (-\infty, +\infty]$ is a convex lower semicontinous functional with a nonempty domain, i.e. $D(\Psi) = \{q\in X: \Psi(q)<+\infty\}\ne \emptyset$.
	\end{center}
	The dual of $X$ is denoted by $X^*$, and for any $q\in X,$ using the Riesz Representation  Theorem, there exists a unique element $\nabla\mathcal F(q)\in X$ such that 
	$$\mathcal F'(q)[\varphi] = \langle\nabla\mathcal F(q),\varphi\rangle \quad \mbox{for all} \ \varphi\in X.$$
	
	\subsection{Critical points and compactness conditions}
	For any $q\in D(\Psi)$, we consider {\bf the subdifferential} of $\Psi$ at $q$ (see \cite{eketembook}) given by
	\begin{eqnarray*}
		\partial\Psi(q) = \{f^*\in X^*: \Psi(\varphi) - \Psi(q)\geq f^*[\varphi - q] \ \mbox{for all} \ \varphi\in D(\Psi)\}.
	\end{eqnarray*}
	It follows that for any $q\in D(\Psi),$ one has 
	\begin{eqnarray*}
		\partial\mathcal I(q) = \mathcal F'(q) + \partial\Psi(q) = \{\mathcal F'(q) + f^*: f^*\in \partial\Psi(q)\}.
	\end{eqnarray*}
	Notice that, by the Riesz Representation Theorem, $\partial\Psi(q)$ and $\partial\mathcal I(q)$ are subsets of $X$ for every $q\in D(\Psi).$
	A point $q\in X$ is {\bf a critical point} of $\mathcal I$ if $q\in D(\Psi)$ and 
	$$0\in \partial\mathcal I(q),$$
	or, equivalently, 
	$$-\mathcal F'(q)\in\partial\Psi(q),$$
	or, equivalently,  the following variational inequality holds:
	$$\Psi(\varphi) - \Psi(q) + \mathcal F'(q)[\varphi - q]\geq 0 \ \mbox{for all} \ \varphi\in D(\Psi).$$
	{\bf A $(PS)$-sequence at the level $c\in\RR$ for $\mathcal I$} (see \cite{szuaihp}) is a sequence $(q_n)\subset D(\Psi)$ such that  $\mathcal I(q_n)\to c$ and there exists a sequence $(\varepsilon_n)\subset [0,\infty)$ having the property that $\varepsilon_n\to 0$ and, for every $n\in \mathbb N,$ one has 
	$$\Psi(\varphi) - \Psi(q_n) + \mathcal F'(q_n)[\varphi - q_n]\geq -\varepsilon_n \|\varphi - q_n\| \ \mbox{for all} \ \varphi\in D(\Psi).$$
	{\bf A $(PS)^*$-sequence at the level $c\in\RR$ for $\mathcal I$} (see \cite{szuaihp}) is a sequence $(q_n)\subset D(\Psi)$ such that  $\mathcal I(q_n)\to c$ and there exists a sequence $(f_n^*)\subset X^*$ having the property that $\|f_n^*\|\to 0$ and, for every $n\in \mathbb N,$ one has 
	$$f_n^*\in\partial\mathcal I(q_n),$$
	or, equivalently,
	$$\Psi(\varphi) - \Psi(q_n) + \mathcal F'(q_n)[\varphi - q_n]\geq f_n^*[\varphi - q_n] \ \mbox{for all} \ \varphi\in D(\Psi).$$
	We need the following result (see Lemma 1.3 in \cite{szuaihp}).
	\begin{Lem}\label{lemps}
		Consider a convex lower semicontinous function $\chi:X\to (-\infty, +\infty]$ with $\chi(0)=0.$ Assume that
		$$\chi(q)\geq -\|q\| \ \mbox{for all} \ q\in X.$$
		Then, there exists $f^*\in X^*$ with $\|f^*\|\leq 1$ and
		$$\chi(q)\geq f^*[q] \ \mbox{for all} \ q\in X.$$
	\end{Lem}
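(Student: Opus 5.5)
This is a Hahn--Banach separation argument in $X\times\RR$. The plan is to separate the epigraph of $\chi$ from the open convex cone lying strictly below the graph of $-\|\cdot\|$, and then read off $f^*$ from the normal of the separating hyperplane. Set
$$A=\{(q,t)\in X\times\RR:\ t\geq \chi(q)\},\qquad B=\{(q,t)\in X\times\RR:\ t< -\|q\|\}.$$
$A$ is convex because $\chi$ is convex, and it is nonempty since $(0,0)\in A$. $B$ is convex because $q\mapsto -\|q\|$ is concave, and it is open because the norm is continuous. The hypothesis $\chi(q)\geq -\|q\|$ gives $A\cap B=\emptyset$. Lower semicontinuity is not even needed here; the geometric Hahn--Banach theorem (one set open) applies directly.

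First I apply that theorem. It gives a nonzero continuous functional $(g^*,\alpha)\in X^*\times\RR$ and a constant $\beta$ such that
$$g^*[q]+\alpha t\geq \beta \ \ \text{on } A, \qquad g^*[q]+\alpha t< \beta \ \ \text{on } B$$
(after possibly changing sign). Since $(0,0)\in A$ and $(0,-s)\in B$ for every $s>0$, we get $-\alpha s<\beta\le 0$ for all $s>0$. Letting $s\to 0^+$ gives $\beta=0$.

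Next I rule out $\alpha\le 0$, which is the only delicate step. If $\alpha<0$, then $(0,-s)\in B$ gives $-\alpha s<0$, which is false. If $\alpha=0$, then $g^*[q]<0$ for every $q$ (every $q$ has some $t$ with $(q,t)\in B$). Applying this to $q$ and $-q$ is impossible, so $g^*=0$ and the functional is trivial, a contradiction. Hence $\alpha>0$, and I set $f^*=-g^*/\alpha$.

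Finally I extract the two conclusions.

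1. For $q\in D(\chi)$, the point $(q,\chi(q))$ lies in $A$. This gives $\alpha\chi(q)\geq -g^*[q]$, i.e. $\chi(q)\geq f^*[q]$. For $q\notin D(\chi)$ the inequality is trivial.
2. From the inequality on $B$: for every $q$ and every $t<-\|q\|$ we have $g^*[q]+\alpha t<0$. Letting $t\uparrow -\|q\|$ gives $f^*[q]\geq -\|q\|$ for all $q$. Replacing $q$ by $-q$ yields $|f^*[q]|\le\|q\|$, so $\|f^*\|\le 1$.
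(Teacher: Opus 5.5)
Your proof is correct. The paper gives no proof of its own here and simply cites Lemma 1.3 of Szulkin; your argument is the standard proof of that lemma: separate the epigraph of $\chi$ from the open cone $\{(q,t): t<-\|q\|\}$, deduce $\beta=0$ and $\alpha>0$, and set $f^*=-g^*/\alpha$. You are also right that lower semicontinuity is never used. One cosmetic point: in the case $\alpha=0$, taking $q=0$ already gives the contradiction $0<0$, so you do not need to pass through $q$ and $-q$ or conclude $g^*=0$.
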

	
	\begin{Lem}\label{echivps}
		Consider a sequence $(q_n)\subset D(\Psi)$ and $c\in\RR.$ We have that $(q_n)$ is a $(PS)$-sequence at the level $c$ for $\mathcal I$ if and only if 
		$(q_n)$ is a $(PS)^*$-sequence at the level $c$ for $\mathcal I.$
	\end{Lem}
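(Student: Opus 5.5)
Both implications compare the two inequalities at fixed $n$; the level condition $\mathcal I(q_n)\to c$ is shared by the definitions, so only the inequality with $\varepsilon_n$ versus the one with $f_n^*$ must be related.

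\emph{$(PS)^*\Rightarrow(PS)$.} Let $f_n^*\in\partial\mathcal I(q_n)$ with $\|f_n^*\|\to0$, and put $\varepsilon_n:=\|f_n^*\|$. For every $\varphi\in D(\Psi)$ we have $f_n^*[\varphi-q_n]\ge -\|f_n^*\|\,\|\varphi-q_n\|$. Combining this with the variational inequality defining $f_n^*\in\partial\mathcal I(q_n)$ gives exactly the $(PS)$ inequality with this $\varepsilon_n$.

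\emph{$(PS)\Rightarrow(PS)^*$.} Fix $n$. If $\varepsilon_n=0$, then $q_n$ is a critical point and we take $f_n^*=0$. Otherwise define
$$\chi_n(v)=\frac1{\varepsilon_n}\Big(\Psi(q_n+v)-\Psi(q_n)+\mathcal F'(q_n)[v]\Big),\qquad v\in X.$$
This function has the following properties:
\begin{itemize}
\item $\chi_n$ is convex and lower semicontinuous, because $\Psi$ is and $\mathcal F'(q_n)$ is a continuous linear functional;
\item $\chi_n(0)=0$, since $q_n\in D(\Psi)$ makes $\Psi(q_n)$ finite;
\item $\chi_n(v)\ge-\|v\|$ for all $v\in X$. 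When $q_n+v\in D(\Psi)$ this is the $(PS)$ inequality with $\varphi=q_n+v$; otherwise $\chi_n(v)=+\infty$.
\end{itemize}

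Lemma \ref{lemps} then gives $g_n^*\in X^*$ with $\|g_n^*\|\le1$ and $\chi_n(v)\ge g_n^*[v]$ for all $v$. Set $f_n^*=\varepsilon_n g_n^*$, so that $\|f_n^*\|\le\varepsilon_n\to0$. Substituting $v=\varphi-q_n$ and multiplying by $\varepsilon_n$ gives
$$\Psi(\varphi)-\Psi(q_n)+\mathcal F'(q_n)[\varphi-q_n]\ge f_n^*[\varphi-q_n]\quad\text{for all }\varphi\in D(\Psi).$$
This says $f_n^*-\mathcal F'(q_n)\in\partial\Psi(q_n)$, i.e. $f_n^*\in\partial\mathcal I(q_n)$.

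The main obstacle is only this second direction, and it is handled by the normalization trick that rescales the $(PS)$ inequality so that Lemma \ref{lemps} applies; it also requires treating $\varepsilon_n=0$ separately.
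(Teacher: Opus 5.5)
Your proposal is correct and follows the paper's proof essentially line by line. For one direction you take $\varepsilon_n=\|f_n^*\|$. For the converse you treat $\varepsilon_n=0$ separately, use the same rescaled convex function $\chi$, apply Lemma \ref{lemps}, and set $f_n^*=\varepsilon_n g_n^*$. You also spell out two small checks the paper leaves implicit: the case $q_n+v\notin D(\Psi)$, and $\chi(0)=0$ because $\Psi(q_n)$ is finite.
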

	\proof Assume that $(q_n)$ is a $(PS)^*$-sequence at the level $c$ for $\mathcal I,$ i.e. $\mathcal I(q_n)\to c$ and there exists a sequence $(f_n^*)\subset X^*$ having the property that $\|f_n^*\|\to 0$ and, for every $n\in \mathbb N,$ one has
	$$\Psi(\varphi) - \Psi(q_n) + \mathcal F'(q_n)[\varphi - q_n]\geq f_n^*[\varphi - q_n] \ \mbox{for all} \ \varphi\in D(\Psi).$$
	For any $n\in\mathbb N$, one has that
	$$|f_n^*[x]|\leq \|f_n^*\|\|x\| \ \mbox{for all} \ x\in X.$$
	In particular,
	$$f_n^*[\varphi - q_n]\geq -\|f_n^*\| \|\varphi - q_n\| \ \mbox{for all} \ \varphi\in D(\Psi).$$
	It follows that
	$$\Psi(\varphi) - \Psi(q_n) + \mathcal F'(q_n)[\varphi - q_n]\geq -\|f_n^*\| \|\varphi - q_n\| \ \mbox{for all} \ \varphi\in D(\Psi),$$
	and $(q_n)$ is a $(PS)$-sequence at the level $c$ for $\mathcal I$ by just taking $\varepsilon_n=\|f_n^*\|$ $(n\in\mathbb N).$
	
	Next, assume that $(q_n)$ is a $(PS)$-sequence at the level $c$ for $\mathcal I,$ i.e. $\mathcal I(q_n)\to c$ and there exists a sequence
	$(\varepsilon_n)\subset [0,\infty)$ having the property that $\varepsilon_n\to 0$ and, for every $n\in \mathbb N,$ one has 
	$$\Psi(\varphi) - \Psi(q_n) + \mathcal F'(q_n)[\varphi - q_n]\geq -\varepsilon_n \|\varphi - q_n\| \ \mbox{for all} \ \varphi\in D(\Psi).$$
	If $\varepsilon_n=0,$ we take $f_n^*=0.$ Now consider $\varepsilon_n>0$ and  $\chi:X\to (-\infty, +\infty]$ given by
	$$\chi(q)=\varepsilon_n^{-1}(\Psi(q+q_n) - \Psi(q_n) + \mathcal F'(q_n)[q]) \quad (q\in X).$$
	It is clear that $\chi$ is convex, lower semicontinuous, $\chi(0)=0$, and 
	$$\chi(q)\geq -\|q\| \ \mbox{for all} \ q\in X.$$
	Hence, using the above Lemma  \ref{lemps}, there exists $g_n^*\in X^*$ with $\|g_n^*\|\leq 1$ and
	$$\chi(q)\geq g_n^*[q] \ \mbox{for all} \ q\in X.$$
	If we take $f_n^*=\varepsilon_n g_n^*,$ we have $\|f_n^*\|\to 0$ and, using that
	$$\varepsilon_n\chi(\varphi - q_n)\geq f_n^*[\varphi - q_n] \ \mbox{for all} \ \varphi\in D(\Psi),$$
	the conclusion is now clear. \cqfd
	
	\subsection{Ekeland-Lasry regularization procedure}
	The following result, which is one of the main tools in this paper, is due to Ekeland and Lasry, Lemma 7 \cite{EkeLasam}.
	\begin{Lem}\label{ekelas}
		Assume that $\mathcal I$ is bounded from below and the following assumption holds true:
		\begin{center}
			(*) There exists $\alpha>0$ such that the function 
			$D(\Psi)\ni q\mapsto \mathcal I(q) + \alpha\|q\|^2\in (-\infty,\infty)$
			is convex.
		\end{center}
		Consider $0<\varepsilon <\alpha^{-1}$, and let $\mathcal I_{\varepsilon}:X\to \RR$ be given by
		\begin{eqnarray*}
			\mathcal I_{\varepsilon}(q) = \inf_{\varphi\in X}(\varepsilon^{-1}\|\varphi - q\|^2 + \mathcal I(\varphi)).
		\end{eqnarray*}
		The functional $\mathcal I_{\varepsilon}$ satisifes the following properties:
		
		$(EL1)$ $\mathcal I_{\varepsilon}\in C^1(X,\RR);$
		
		$(EL2)$  for all $q\in X,$ one has that
		$$ \inf_{X}\mathcal I \leq \mathcal I_{\varepsilon}(q)\leq \mathcal I(q);$$
		
		$(EL3)$ for a fixed $q\in X,$ one has that
		$$\mathcal I_{\varepsilon}'(q)=0 \Leftrightarrow [q\in D(\Psi), 0\in\partial\mathcal I(q)] \Leftrightarrow \mathcal I_{\varepsilon}(q)=\mathcal I(q).$$
		Moreover, consider the function $\gamma:X\to X$ given by
		$$\gamma(q) = q- \frac{\varepsilon}{2}\nabla\mathcal I_{\varepsilon}(q) \quad (q\in X).$$
		Then, for any $q\in X,$ one has that $\gamma(q)\in D(\Psi)$, and
		
		$(EL4)$ $\mathcal I(\gamma(q)) = \mathcal I_{\varepsilon}(q) - \varepsilon^{-1}\|q - \gamma(q)\|^2;$
		
		$(EL5)$ $2\varepsilon^{-1}(q - \gamma(q)) \in \partial\mathcal I(\gamma(q)).$
	\end{Lem}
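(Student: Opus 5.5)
The plan is to follow the classical Moreau--Yosida / Ekeland--Lasry argument, writing $\mathcal J(\varphi)=\mathcal I(\varphi)+\alpha\|\varphi\|^2$. By $(*)$, $\mathcal J$ is convex on $D(\Psi)$, and it is lower semicontinuous because $\Psi$ is l.s.c. and $\mathcal F$ is $C^1$. Fix $q\in X$ and consider
$$\varphi\mapsto \varepsilon^{-1}\|\varphi-q\|^2+\mathcal I(\varphi)=\mathcal J(\varphi)+(\varepsilon^{-1}-\alpha)\|\varphi\|^2-2\varepsilon^{-1}\langle\varphi,q\rangle+\varepsilon^{-1}\|q\|^2 .$$
Since $\varepsilon^{-1}-\alpha>0$, this map is strongly convex and l.s.c. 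It is also coercive, since $\mathcal I$ is bounded below. Hence it has a unique minimizer, which I call $\gamma(q)\in D(\Psi)$. Property $(EL2)$ follows at once: take $\varphi=q$ for the upper bound, and use $\inf\mathcal I$ for the lower bound.

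Next, the first-order condition at the minimizer. The function $\varphi\mapsto\varepsilon^{-1}\|\varphi-q\|^2+\mathcal F(\varphi)$ is $C^1$ and $\Psi$ is convex. So the sum rule for a convex plus a $C^1$ function, obtained by perturbing along segments $\gamma+t(\varphi-\gamma)$ and using the convexity of $\Psi$, gives
$$2\varepsilon^{-1}(q-\gamma(q))\in\partial\mathcal I(\gamma(q)).$$
This is $(EL5)$ once we know $\gamma(q)=q-\frac{\varepsilon}{2}\nabla\mathcal I_\varepsilon(q)$. I would also check that $q\mapsto\gamma(q)$ is Lipschitz. Write $\partial\mathcal J=\partial\mathcal I+2\alpha\,\mathrm{id}$, which is monotone. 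From $2\varepsilon^{-1}(q_i-\gamma_i)+2\alpha\gamma_i\in\partial\mathcal J(\gamma_i)$, monotonicity gives
$$(\varepsilon^{-1}-\alpha)\|\gamma_1-\gamma_2\|^2\le \varepsilon^{-1}\langle q_1-q_2,\gamma_1-\gamma_2\rangle,$$
so $\|\gamma_1-\gamma_2\|\le (1-\alpha\varepsilon)^{-1}\|q_1-q_2\|$.

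The main obstacle is differentiability $(EL1)$ together with the formula for $\nabla\mathcal I_\varepsilon$. I would argue with two-sided estimates. Testing the infimum at $q+h$ with $\varphi=\gamma(q)$ gives
$$\mathcal I_\varepsilon(q+h)-\mathcal I_\varepsilon(q)\le 2\varepsilon^{-1}\langle q-\gamma(q),h\rangle+\varepsilon^{-1}\|h\|^2 .$$
Symmetrically, testing the infimum at $q$ with $\gamma(q+h)$ gives
$$\mathcal I_\varepsilon(q+h)-\mathcal I_\varepsilon(q)\ge 2\varepsilon^{-1}\langle q+h-\gamma(q+h),h\rangle-\varepsilon^{-1}\|h\|^2 .$$
By the Lipschitz continuity of $\gamma$, the difference between the two linear terms is $O(\|h\|^2)$. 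Hence $\mathcal I_\varepsilon$ is Fréchet differentiable with $\nabla\mathcal I_\varepsilon(q)=2\varepsilon^{-1}(q-\gamma(q))$, and this gradient is continuous, even Lipschitz. So $\mathcal I_\varepsilon\in C^1$, and $\gamma(q)=q-\frac\varepsilon2\nabla\mathcal I_\varepsilon(q)$. Then $(EL4)$ is just the statement that the infimum is attained at $\gamma(q)$.

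Finally, $(EL3)$. Suppose $\mathcal I_\varepsilon'(q)=0$. Then $\gamma(q)=q$, so $q\in D(\Psi)$ and $0\in\partial\mathcal I(q)$ by $(EL5)$; moreover $\mathcal I_\varepsilon(q)=\mathcal I(q)$ by $(EL4)$.

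Conversely, suppose $0\in\partial\mathcal I(q)$. Then $-2\alpha q\in\partial\Psi(q)+\mathcal F'(q)+$ nothing more, i.e. $2\alpha q\in\partial\mathcal J(q)$ in the sense of convex subdifferentials. The subdifferential inequality for the convex $\mathcal J$ gives
$$\mathcal I(\varphi)\ge\mathcal I(q)-\alpha\|\varphi-q\|^2\quad\text{for all }\varphi .$$
Since $\varepsilon^{-1}>\alpha$, the point $q$ is the unique minimizer of $\varepsilon^{-1}\|\varphi-q\|^2+\mathcal I(\varphi)$. Therefore $\gamma(q)=q$ and $\nabla\mathcal I_\varepsilon(q)=0$.

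Now suppose $\mathcal I_\varepsilon(q)=\mathcal I(q)$. Then $\varphi=q$ attains the infimum, and by uniqueness of the minimizer $\gamma(q)=q$, which closes the chain of equivalences.
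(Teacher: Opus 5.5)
Your proof is correct. The paper contains no argument of its own here: it only matches each item with a numbered relation in Lemma~7 of Ekeland--Lasry. Your minimizer $\gamma(q)$ is their map $\psi$ from Step~1, (EL4) and (EL5) are their (47) and (48), the gradient formula is their (63), and (EL2), (EL3) are (41), (42). As far as the citations reveal, your organization follows the same two steps: first the proximal minimization and its first-order condition, then differentiability.

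What you supply is a self-contained Moreau--Yosida argument:
\begin{itemize}
\item Strong convexity of $\varphi\mapsto\varepsilon^{-1}\|\varphi-q\|^2+\mathcal I(\varphi)$ gives a unique minimizer and its first-order condition; this is exactly where $\varepsilon<\alpha^{-1}$ enters.
\item Monotonicity of the convex subdifferential of $\mathcal J=\mathcal I+\alpha\|\cdot\|^2$ makes $\gamma$ Lipschitz with constant $(1-\alpha\varepsilon)^{-1}$.
\item The two test-point estimates give $\nabla\mathcal I_\varepsilon(q)=2\varepsilon^{-1}(q-\gamma(q))$ with an $O(\|h\|^2)$ remainder. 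So the paper's definition of $\gamma$ through the gradient and yours as the proximal point coincide.
\end{itemize}
The citation is shorter. However, it leaves the reader to check that Ekeland--Lasry's setting matches $(H)$ and $(*)$. It also leaves unchecked that their notion of critical point matches the definition $\partial\mathcal I:=\mathcal F'+\partial\Psi$, used here for a nonconvex $\mathcal I$. Your route verifies all of this directly and even yields $\mathcal I_\varepsilon\in C^{1,1}$.

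Two small points to tidy.

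First, you assert the identity $\partial\mathcal J=\partial\mathcal I+2\alpha\,\mathrm{id}$ without proof. You only use the inclusion $u\in\partial\mathcal I(\gamma)\Rightarrow u+2\alpha\gamma\in\partial\mathcal J(\gamma)$. To prove it, take $\varphi\in D(\Psi)$ and $t\in(0,1]$, and consider $t^{-1}[\mathcal J(\gamma+t(\varphi-\gamma))-\mathcal J(\gamma)]$. Convexity of $\mathcal J$ bounds this quotient above by $\mathcal J(\varphi)-\mathcal J(\gamma)$. Since $u-\nabla\mathcal F(\gamma)\in\partial\Psi(\gamma)$, it is bounded below by $\langle u-\nabla\mathcal F(\gamma),\varphi-\gamma\rangle$ plus the difference quotient of $\mathcal F+\alpha\|\cdot\|^2$. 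Now let $t\to0^+$.

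Second, in the converse part of (EL3), the expression ``$-2\alpha q\in\partial\Psi(q)+\mathcal F'(q)+$ nothing more'' is garbled. The hypothesis is $0\in\mathcal F'(q)+\partial\Psi(q)$. By the inclusion above this gives $2\alpha q\in\partial\mathcal J(q)$, and the rest of your argument then goes through unchanged.
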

	\proof $(EL2)$ is relation $(41)$ in Lemma 7 \cite{EkeLasam}. $(EL3)$ is relation $(42)$ in Lemma 7 \cite{EkeLasam}. Our function $\gamma$ is the function $\psi$ introduced in Step 1 of the proof of Lemma 7 \cite{EkeLasam}. The formula for $\gamma$ given above is relation $(63)$ in Step 2 of the proof of Lemma 7 \cite{EkeLasam}. $(EL4)$ is relation $(47)$ in Step 1 of the proof of Lemma 7 \cite{EkeLasam}. $(EL5)$ is relation $(48)$ in Step 1 of the proof of Lemma 7 \cite{EkeLasam}. \cqfd
	
	Given $c\in\RR$, we say that $\mathcal I$  satisfies {\bf the weak Palais-Smale condition at the level $c$} (for short, {\bf $(wPS)_c$-condition}) (see \cite{ABT, ABT2}) if for any sequence $(q_n)$ from $D(\Psi)$ which is a $(PS)$-sequence at the level $c,$ there exists a subsequence $(q_{n_k})$ converging in $Y$ to a critical point $q^*$ of $\mathcal I$ such that $\mathcal I(q^*)=c,$ that is $q^*$ is a critical point of $\mathcal I$ at the level $c$ and $\|q_{n_k} - q^*\|_Y\to 0.$
	
	Given $c\in\RR$, we say that $\mathcal I_{\varepsilon}$  satisfies {\bf the weak Palais-Smale condition at the level $c$} if for any sequence $(q_n)$ from $X$ such that $\mathcal I_{\varepsilon}(q_n)\to c$ and $\mathcal I_{\varepsilon}'(q_n)\to 0,$ there exists a subsequence $(q_{n_k})$ converging in $Y$ to $q^*\in X$  such that $\mathcal I_{\varepsilon}(q^*)=c$ and $\mathcal I_{\varepsilon}'(q^*)=0.$
	
	\begin{Lem}\label{iepsilonps}
		Consider $c\in\RR$ such that $\mathcal I$ satisfies $(wPS)_c$-condition. If $0<\varepsilon <\alpha^{-1}$, with $\alpha$ given in $(*),$ then $\mathcal I_{\varepsilon}$ satisfies $(wPS)_c$-condition.
	\end{Lem}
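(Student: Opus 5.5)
Take a sequence $(q_n)\subset X$ with $\mathcal I_{\varepsilon}(q_n)\to c$ and $\mathcal I_{\varepsilon}'(q_n)\to 0$. We move to the points $p_n=\gamma(q_n)\in D(\Psi)$ given by Lemma \ref{ekelas}, show that $(p_n)$ is a $(PS)$-sequence for $\mathcal I$ at the level $c$, apply $(wPS)_c$ for $\mathcal I$, and then transfer the conclusion back to $(q_n)$.

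\emph{Step 1: $(p_n)$ is a $(PS)^*$-sequence.} By definition of $\gamma$,
$$q_n-p_n=\frac{\varepsilon}{2}\nabla\mathcal I_{\varepsilon}(q_n),$$
so $\|q_n-p_n\|\to 0$. By $(EL4)$,
$$\mathcal I(p_n)=\mathcal I_{\varepsilon}(q_n)-\varepsilon^{-1}\|q_n-p_n\|^2\to c.$$
By $(EL5)$, $f_n^*:=2\varepsilon^{-1}(q_n-p_n)=\nabla\mathcal I_{\varepsilon}(q_n)$ belongs to $\partial\mathcal I(p_n)$, and $\|f_n^*\|\to 0$. Hence $(p_n)$ is a $(PS)^*$-sequence at the level $c$, and by Lemma \ref{echivps} it is a $(PS)$-sequence at the level $c$.

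\emph{Step 2: convergence of $(q_n)$.} By $(wPS)_c$ for $\mathcal I$, there is a subsequence $p_{n_k}\to q^*$ in $Y$, where $q^*$ is a critical point of $\mathcal I$ with $\mathcal I(q^*)=c$. The injection $i$ is bounded, so
$$\|q_{n_k}-p_{n_k}\|_Y\le C\|q_{n_k}-p_{n_k}\|\to 0.$$
Therefore $q_{n_k}\to q^*$ in $Y$ as well.

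\emph{Step 3: $q^*$ is a critical point of $\mathcal I_{\varepsilon}$ at the level $c$.} Since $q^*\in D(\Psi)$ and $0\in\partial\mathcal I(q^*)$, $(EL3)$ gives both $\mathcal I_{\varepsilon}'(q^*)=0$ and $\mathcal I_{\varepsilon}(q^*)=\mathcal I(q^*)=c$.

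The argument is routine once Lemma \ref{ekelas} is available. The only point that needs care is that convergence holds only in $Y$, not in $X$. This is why we cannot pass to the limit directly in $\mathcal I_{\varepsilon}$ and must instead identify the limit through $(EL3)$.
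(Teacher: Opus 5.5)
Your proof is correct and follows the same route as the paper: you pass to $\gamma(q_n)$, use (EL4)--(EL5) and Lemma \ref{echivps} to obtain a $(PS)$-sequence for $\mathcal I$ at the level $c$, apply $(wPS)_c$, and identify the limit through (EL3). Your Step 2 deserves a note. There you transfer the $Y$-convergence from $\gamma(q_{n_k})$ back to $q_{n_k}$, using $\|q_{n_k}-\gamma(q_{n_k})\|\to 0$ and the boundedness of the injection. The paper leaves this step implicit, but the definition of $(wPS)_c$ for $\mathcal I_{\varepsilon}$ requires it, so including it is correct.
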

	\proof Consider a sequence $(q_n)\subset X$ such that
	$$\mathcal I_{\varepsilon}(q_n)\to c \ \mbox{and} \ \mathcal I_{\varepsilon}'(q_n)\to 0.$$
	For any positive integer $n$, we consider
	$$\varphi_n = \gamma(q_n), \quad u_n = 2\varepsilon^{-1}(q_n - \varphi_n).$$
	Using Lemma \ref{ekelas} - $(EL4)$, we have that
	\begin{eqnarray*}
		\mathcal I(\varphi_n) &=& \mathcal I(\gamma(q_n))\\
		&=& \mathcal I_{\varepsilon}(q_n) - \varepsilon^{-1}\|q_n - \gamma(q_n)\|^2\\
		&=& \mathcal I_{\varepsilon}(q_n) - \varepsilon^{-1}\|q_n - \varphi_n\|^2\\
		&=& \mathcal I_{\varepsilon}(q_n) - \frac{\varepsilon}{4}\|u_n\|^2.
	\end{eqnarray*}
	Using Lemma \ref{ekelas} - $(EL5)$, we have that
	$$2\varepsilon^{-1}(q_n - \gamma(q_n))\in \partial\mathcal I(\gamma(q_n)),$$
	that is
	$$u_n\in \partial\mathcal I(\varphi_n).$$
	But, from the definition of $\gamma$, one has that
	$$\nabla\mathcal I_{\varepsilon}(q_n) = 2\varepsilon^{-1}(q_n - \gamma(q_n)) = u_n,$$
	so $u_n\to 0$ and $\mathcal I(\varphi_n)\to c.$ We deduce that $(\varphi_n)$ is a $(PS)^*$-sequence of $\mathcal I$ at the level $c,$ and using Lemma \ref{echivps}, it follows that $(\varphi_n)$ is a $(PS)$-sequence of $\mathcal I$ at the level $c.$ But $\mathcal I$ satisfies $(wPS)_c$-condition, so there exists a subsequence $(\varphi_{n_k})$ and $q^*\in D(\Psi)$ such that $\mathcal I(q^*) = c,$  $0\in\partial\mathcal I(q^*),$ and $\|\varphi_{n_k} - q^*\|_Y\to 0.$ Then, using Lemma \ref{ekelas} - $(EL3),$ we deduce that $\mathcal I_{\varepsilon}(q^*)=c,$ $\mathcal I_{\varepsilon}'(q^*)=0,$ and $\mathcal I_{\varepsilon}$ satisfies $(wPS)_c$-condition. \cqfd
	
	\subsection{ Benci's $S^1$-index and Ghoussoub's location theorem}
	In this subsection we recall some known results.
	
	First, we introduce Benci's $S^1$-index (see \cite{bencicpam, MawWil}).
	Consider
	$$S^1 = \{z\in\mathbb C: |z|=1\},$$
	which is a compact topological group with the multiplication of complex numbers. We identify the reals $\mbox{mod} \  2\pi,$ that is $\RR/2\pi$, with $S^1$ by $\theta\leftrightarrow e^{i\theta}.$ Thus, the group $(\RR/2\pi, +)$ is identified with $(S^1, \cdot).$ We consider a real Banach space $\mathcal X,$  and we denote by $B(\mathcal X)$ the Banach space of all bounded linear operators acting on $\mathcal X$ endowed with the usual operator norm. We recall that an operator $A\in B(\mathcal X)$ is called an {\bf isometry} if $\|Ax\| = \|x\|$ for all $x\in\mathcal X.$ Next, a {\bf representation} of $S^1$ over the Banach space $\mathcal X$ is a function
	$$L:S^1\to B(\mathcal X): \quad \theta\mapsto L(\theta)$$
	having the following properties:
	\begin{eqnarray*}
		&L(0) = \mbox{id},\\
		&L(\theta_1 + \theta_2) = L(\theta_1)L(\theta_2) \ \mbox{for all} \ \theta_1, \theta_2\in S^1,\\
		&S^1\times\mathcal X\ni (\theta, x)\mapsto L(\theta)x\in\mathcal X \ \mbox{is continuous}.
	\end{eqnarray*}
	A subset $A$ of $\mathcal X$ is {\bf invariant} under the representation $L$ if $L(\theta)A=A$ for all $\theta\in S^1.$ A representation $L$ of $S^1$ over $\mathcal X$ is {\bf isometric} if $L(\theta)$ is an isometry for all $\theta\in S^1.$ A mapping $R$ between two invariant subsets of $\mathcal X$ under the representation $L$ of $S^1$ is {\bf equivariant} if
	$$R\circ L(\theta) = L(\theta)\circ R \ \mbox{for all} \ \theta\in S^1.$$
	Consider 
	$$\mathcal C=\{A\subset \mathcal X: A \ \mbox{is closed and invariant}\}.$$
	The {\bf $S^1$-index} associated to the isometric representation $L$ is the function
	$$\operatorname{ind}:\mathcal C\to \mathbb N\cup \{+\infty\}: \ A\mapsto \operatorname{ind} (A)$$
	defined as follows: for $A\in \mathcal C,$ the $S^1$-index of $A$ is the smallest integer $k$ such that there exists $n\in \mathbb N\setminus \{0\}$ and $\Phi\in C(A, \mathbb C^k\setminus \{0\})$ with 
	$$\Phi(L(\theta)x)=e^{in\theta}\Phi(x) \ \mbox{for all}\  \theta\in S^1, x\in A.$$
	If such a mapping does not exist, then we define
	$$\operatorname{ind}(A) = +\infty.$$
	Finally, we define
	$$\operatorname{ind}(\emptyset) = 0.$$
	The mapping $\mbox{``ind"}$ defined above is an index for the representation $L,$ that is it has the following properties:
	
	(i) $\operatorname{ind}(A) = 0$ if and only if $A=\emptyset;$
	
	(ii) if $R:A_1\to A_2$ is equivariant and continuous, then
	$$\operatorname{ind}(A_1)\leq \operatorname{ind}(A_2);$$
	
	(iii) if $A\in \mathcal C$ is compact, then there exists $N\in\mathcal C$ with $A\subset  \operatorname{int}(N)$ and
	$$\operatorname{ind}(N) = \operatorname{ind}(A);$$
	
	(iv) for all $A_1, A_2\in\mathcal C,$ one has that
	$$\operatorname{ind}(A_1\cup A_2)\leq \operatorname{ind}(A_1) + \operatorname{ind}(A_2).$$ 
	
	Next, we introduce Ghoussoub's location theorem (see \cite{ghojram, ghobook, ekebook}). A functional $\mathcal J:\mathcal X\to (-\infty,\infty]$ is {\bf invariant} for the representation $L$ of $S^1$ over $\mathcal X$ if
	$$\mathcal J\circ L(\theta) = \mathcal J \ \mbox{for all} \ \theta\in S^1.$$
	From Lemma 7 in \cite{EkeLasam}, one has the following result.
	\begin{Lem}\label{invfunct}
		Under the assumptions of Lemma \ref{ekelas},
		if $\mathcal I$ is invariant, then $\mathcal I_{\varepsilon}$ is invariant for any $0<\varepsilon<\alpha^{-1}.$
	\end{Lem}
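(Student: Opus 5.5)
The plan is to compute directly from the definition of $\mathcal I_{\varepsilon}$ in Lemma \ref{ekelas}, using that the representation $L$ is isometric and that each $L(\theta)$ is a bijection of $X$. Fix $\theta\in S^1$ and $q\in X$. I will show that $\mathcal I_{\varepsilon}(L(\theta)q)$ is an infimum over the same set of numbers as $\mathcal I_{\varepsilon}(q)$, after a change of variables in the infimum.

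First, I note that $L(\theta)$ is invertible with inverse $L(-\theta)$. Indeed, $L(\theta)L(-\theta)=L(0)=\mbox{id}=L(-\theta)L(\theta)$ by the representation properties. Hence $\varphi\mapsto L(\theta)\varphi$ is a bijection of $X$ onto $X$.

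Next, I write
\begin{eqnarray*}
\mathcal I_{\varepsilon}(L(\theta)q) = \inf_{\varphi\in X}\left(\varepsilon^{-1}\|\varphi - L(\theta)q\|^2 + \mathcal I(\varphi)\right)
\end{eqnarray*}
and substitute $\varphi = L(\theta)\psi$ with $\psi$ ranging over $X$. This substitution is legitimate precisely because of the bijectivity just established. Linearity of $L(\theta)$ and the isometry property give $\|L(\theta)\psi - L(\theta)q\| = \|L(\theta)(\psi-q)\| = \|\psi - q\|$. Invariance of $\mathcal I$ gives $\mathcal I(L(\theta)\psi)=\mathcal I(\psi)$; this holds as an identity in $(-\infty,\infty]$, so $+\infty$ values cause no difficulty. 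The infimum therefore becomes
\begin{eqnarray*}
\inf_{\psi\in X}\left(\varepsilon^{-1}\|\psi - q\|^2 + \mathcal I(\psi)\right)=\mathcal I_{\varepsilon}(q).
\end{eqnarray*}
This holds for every $\theta$ and every $q$, so $\mathcal I_{\varepsilon}$ is invariant.

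The only point needing care is that the representation is isometric with respect to the Hilbert norm of $X$, the norm used in the definition of $\mathcal I_{\varepsilon}$. In this section the index is set up for an isometric representation on the space where the functional lives, so I take the representation to be over $X$ and isometric there. Beyond this, there is no real obstacle: the restriction $0<\varepsilon<\alpha^{-1}$ only guarantees that $\mathcal I_{\varepsilon}$ is well defined and finite, via $(EL2)$, and plays no further role.
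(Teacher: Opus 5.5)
Your proof is correct. The paper gives no argument of its own for this lemma and only refers to Lemma 7 of \cite{EkeLasam}.

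Your change of variables $\varphi=L(\theta)\psi$ in the infimum rests on three facts:
\begin{itemize}
\item $L(\theta)^{-1}=L(-\theta)$ on $X$;
\item $\|L(\theta)\psi-L(\theta)q\|=\|\psi-q\|$, by linearity and isometry in the Hilbert norm of $X$;
\item $\mathcal I\circ L(\theta)=\mathcal I$, as an identity in $(-\infty,+\infty]$.
\end{itemize}
This is the standard direct verification, and it makes the statement self-contained.

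Your route also shows something the citation hides: none of the hypotheses of Lemma \ref{ekelas} are used. The identity $\mathcal I_{\varepsilon}\circ L(\theta)=\mathcal I_{\varepsilon}$ holds for every $\varepsilon>0$ as an equality of extended reals. Condition $(*)$ and the bound $\varepsilon<\alpha^{-1}$ matter only for $(EL1)$--$(EL5)$. Finiteness of $\mathcal I_\varepsilon$, for any $\varepsilon>0$, comes from the lower bound on $\mathcal I$ together with $D(\Psi)\neq\emptyset$, not from $(EL2)$ or from $\varepsilon<\alpha^{-1}$.

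The one real requirement is that each $L(\theta)$ be a linear isometry of $X$ onto $X$ for the norm used in defining $\mathcal I_\varepsilon$, and you isolate it correctly. This is the standing assumption on $L|_X$ made before Lemma \ref{indexorbit}. It holds for time translations on $H^1_{2\pi}$.
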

	A class $\mathcal G$ of compact invariant subsets of $\mathcal X$ is {\bf stable by equivariant homotopies} if for any $A\in\mathcal G$ and $h\in C([0,1]\times \mathcal X,\mathcal X)$ such that $h(s,\cdot)$ is equivariant for all $s\in [0,1]$ and $h(0,\cdot)=\mbox{id},$ one has $h(1,A)\in\mathcal G.$
	
	\begin{The}\label{Ghoussoub}
		Consider a class $\mathcal G$ of compact invariant subsets of $\mathcal X$ which is stable by equivariant homotopies, and let  $\mathcal J:\mathcal X\to\RR$ be a $C^1$ invariant functional such that
		$$c:=\inf_{A\in\mathcal G} \max_A \mathcal J>-\infty.$$
		Assume that $F$ is a closed invariant subset of $\mathcal X$ such that for all $A\in\mathcal G,$ one has that
		\begin{itemize}
			\item $A\cap F\neq\emptyset,$ 
			\item $\sup\limits_{A\cap F}\mathcal J\geq c.$
		\end{itemize}
		Then, there exists a sequence $(x_n)$ in $\mathcal X$ such that
		$$\mathcal J(x_n)\to c, \quad \mathcal J'(x_n)\to 0,$$
		and one has the following localization:
		$$\operatorname{dist}(x_n, F)\to 0.$$
	\end{The}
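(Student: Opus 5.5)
This is Ghoussoub's location theorem in the equivariant setting. I would prove it by contradiction, using an equivariant deformation lemma localized near $F$.

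Suppose the conclusion fails. Then there exist $\varepsilon_0>0$ and $\delta>0$ such that
$$\|\mathcal J'(x)\|\geq \delta \quad\text{for all } x\in N:=\{x: |\mathcal J(x)-c|\leq 2\varepsilon_0,\ \operatorname{dist}(x,F)\leq 2\delta\}.$$
Otherwise a diagonal choice would produce the required sequence.

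The first step is to build a deformation that moves only near $F$ and near the level $c$.

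\emph{Vector field.} On $\tilde X=\{x:\mathcal J'(x)\neq 0\}$ take a locally Lipschitz pseudo-gradient field $v$ with $\|v\|\leq 2\|\mathcal J'\|$ and $\mathcal J'(x)[v(x)]\geq\|\mathcal J'(x)\|^2$. Since $\mathcal J$ is invariant and $L$ is isometric, $\mathcal J'(L(\theta)x)=\mathcal J'(x)\circ L(-\theta)$. Averaging over the compact group with Haar measure,
$$\bar v(x)=\int_{S^1}L(-\theta)\,v(L(\theta)x)\,d\theta,$$
gives an equivariant field with the same two properties, because both conditions are convex in $v$ and preserved by the isometries.

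\emph{Cut-offs.} Take invariant Lipschitz cut-offs $\eta(x)=\eta_1(\operatorname{dist}(x,F))\,\eta_2(\mathcal J(x))$. Here $\eta_1=1$ on $[0,\delta]$ and $\eta_1=0$ on $[2\delta,\infty)$; $\eta_2=1$ on $[c-\varepsilon,c+\varepsilon]$ and $\eta_2=0$ outside $[c-2\varepsilon,c+2\varepsilon]$. Invariance of $\eta$ uses that $F$ is invariant and $L$ isometric, so $\operatorname{dist}(L(\theta)x,F)=\operatorname{dist}(x,F)$.

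\emph{Flow.} Consider $\dot\sigma=-\eta(\sigma)\bar v(\sigma)/\|\bar v(\sigma)\|$, the field being bounded on the support of $\eta$. The flow $\sigma(s,\cdot)$ is globally defined, continuous, equivariant (by uniqueness of solutions), and $\sigma(0,\cdot)=\mathrm{id}$. Along it $\mathcal J$ is nonincreasing. Moreover, while $\sigma$ stays in $\{\operatorname{dist}(\cdot,F)\leq\delta,\ |\mathcal J-c|\leq\varepsilon\}$, $\mathcal J$ decreases at rate at least $\delta/2$. Each point moves at unit speed at most, so over time $t\leq\delta/2$ it travels distance at most $\delta/2$.

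The second step chooses the parameters and runs the minimax argument. Choose $\varepsilon<\min(\varepsilon_0,\delta^2/8)$ and $A\in\mathcal G$ with $\max_A\mathcal J\leq c+\varepsilon$. Put $h(s,x)=\sigma(s\delta/2,x)$; by stability, $A'=h(1,A)\in\mathcal G$.

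Take $x\in A$ with $\operatorname{dist}(x,F)\leq\delta/2$. Then the trajectory stays within distance $\delta$ of $F$ up to time $\delta/2$. If it also stayed above level $c-\varepsilon$, the value would drop by at least $(\delta/2)(\delta/2)=\delta^2/4>2\varepsilon$, which is impossible. Hence $\mathcal J(h(1,x))<c-\varepsilon$.

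Now take $y\in A'\cap F$, with $y=h(1,x)$. Since $\operatorname{dist}(y,F)=0$ and the trajectory moved at most $\delta/2$, we get $\operatorname{dist}(x,F)\leq\delta/2$, so $\mathcal J(y)<c-\varepsilon$. As $A'\cap F$ is compact, $\sup_{A'\cap F}\mathcal J<c$, which contradicts the hypothesis.

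The main obstacle is constructing the equivariant pseudo-gradient field. In a Hilbert setting I would first try the Riesz gradient $\nabla\mathcal J$ directly: it is equivariant, but only continuous, so uniqueness of the flow fails. The averaging over $S^1$ (continuity of the action plus compactness) therefore seems necessary, and I would need to check that the averaged field is still locally Lipschitz. Alternatively, one could cite the equivariant deformation lemma of Ghoussoub/Ekeland as standard.
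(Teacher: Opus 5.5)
Your proof is correct. It necessarily takes a different route from the paper, because the paper gives no argument for this statement: it imports it as Ghoussoub's location theorem from the works of Ghoussoub and Ekeland cited just before it, where it belongs to Ghoussoub's general min--max framework and holds in more general settings. The citation buys brevity and that generality. Your self-contained equivariant deformation argument buys transparency about where the group enters: only through the invariance of $\operatorname{dist}(\cdot,F)$ and of $\mathcal J$, and through making the pseudo-gradient equivariant so that $h$ is admissible for the stability of $\mathcal G$.

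The step you flag as the main obstacle does go through:
\begin{itemize}
\item For $x\in\tilde X$, cover the compact orbit $\mathcal O(x)$ by finitely many balls $B(y_i,r_i/2)$ with $B(y_i,r_i)\subset\tilde X$ and $v$ $K_i$-Lipschitz on $B(y_i,r_i)$.
\item Set $r=\min_i r_i/2$ and $K=\max_i K_i$. For $z,w\in B(x,r)$ and each $\theta$, the points $L(\theta)z$ and $L(\theta)w$ lie in a common ball $B(y_i,r_i)$, because $L(\theta)$ is an isometry.
\item Hence $\|\bar v(z)-\bar v(w)\|\le K\|z-w\|$.
\end{itemize}

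Some bookkeeping should be made explicit:
\begin{itemize}
\item $\varepsilon$ has to be fixed before $\eta_2$ is built.
\item The truncated field must be defined as $0$ where $\eta=0$. It is locally Lipschitz at the boundary of $\{\eta>0\}$ because the closure of that set lies in $N$ (this uses $\varepsilon\le\varepsilon_0$). There $\|\bar v\|\ge\|\mathcal J'\|\ge\delta$, so $\|\bar v\|>\delta/2$ on a neighbourhood.
\item Compactness of $A'\cap F$ is not needed, since you already have the uniform bound $\mathcal J<c-\varepsilon$ there.
\item Like the statement itself, you tacitly use $c<+\infty$, i.e. $\mathcal G\neq\emptyset$.
\end{itemize}
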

	
	\subsection{Lusternik-Schnirelman method with the $S^1$-index and weak compactness}
	
	{\bf Computations with the $S^1$-index}
	
	In this subsection we work in the abstract setting we have introduced. Let $L$ be an isometric representation of $S^1$ over the Banach space $Y$ such that $L(\theta)X=X$ for all $\theta\in S^1$, and $L|_X = \{L(\theta)|_{X}\}_{\theta\in S^1}$ is an isometric representation of $S^1$ over the Hilbert space $X$ with the associated (to $L|_X$) $S^1$-index also denoted as above by $``\operatorname{ind}".$ Consider
	$$\operatorname{Fix} (S^1) = \{y\in Y: L(\theta)y=y \ \mbox{for all} \ \theta\in S^1\},$$
	and assume that
	$$\operatorname{Fix}(S^1)\subset X.$$
	For any $y\in Y,$ the {\bf orbit} of $y$ is given by
	$$\mathcal O(y) = \{L(\theta)y: \theta\in S^1\}.$$
	For every $y\in Y$ and $\delta>0$, we consider
	$$\mathcal O^{\delta}(y) = \{z\in Y: \operatorname{dist}_Y(z, \mathcal O(y))\leq \delta\}.$$
	Notice that $\mathcal O(y)$ is compact invariant in $Y$, and $\mathcal O^{\delta}(y)$ is closed invariant in $Y.$ For any $q\in X$, one has that $\mathcal O(q)\subset X,$ but $\mathcal O^{\delta}(q)\subset Y$  $(\delta>0).$
	It is clear that
	$$\operatorname{ind}(\mathcal O(q)) = +\infty \ \mbox{for all} \ q\in \operatorname{Fix}(S^1).$$
	Moreover, if $q\in X\setminus \operatorname{Fix}(S^1),$ then the  $2\pi$-periodic continuous function
	$$S^1\ni \theta\mapsto L(\theta)q\in X$$
	is not constant, so it has a minimal period $T^*>0$, and there exists a positive integer $n$ such that $2\pi = nT^*.$ Thus, for any positive integer $j,$ the function 
	$$\mathcal O(q)\ni L(\theta)q\mapsto e^{ijn\theta}\in\mathbb C\setminus \{0\}, \ \theta\in S^1,$$
	is well-defined and continuous. Hence, in particular, it is clear that
	$$\operatorname{ind}(\mathcal O(q)) = 1.$$
	The next result is a generalization of this remark.
	
	\begin{Lem}\label{indexorbit}
		If $q_1, ... , q_m\in X\setminus \operatorname{Fix}(S^1),$ then there exists $\delta>0$ such that  
		$$\bigcup_{j=1}^m\mathcal O^{\delta}(q_j)\cap \operatorname{Fix}(S^1)=\emptyset, \quad \operatorname{ind}\left(\bigcup_{j=1}^m\mathcal O^{\delta}(q_j)\cap X\right) = 1.$$
	\end{Lem}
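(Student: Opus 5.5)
The plan is to build explicitly one continuous map $\Phi:\bigcup_{j}\mathcal O^{\delta}(q_j)\cap X\to\mathbb C\setminus\{0\}$ with $\Phi(L(\theta)x)=e^{iN\theta}\Phi(x)$ for a single positive integer $N$. Such a map gives $\operatorname{ind}\le 1$. Since the set is nonempty, property (i) gives $\operatorname{ind}\ge 1$.

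\textbf{Step 1: reduce to disjoint pieces.} Two orbits either coincide or are disjoint, so we may discard repetitions and assume the orbits $\mathcal O(q_1),\dots,\mathcal O(q_m)$ are pairwise disjoint. Each orbit is compact in $Y$, being the continuous image of $S^1$. The set $\operatorname{Fix}(S^1)$ is closed in $Y$, because each $L(\theta)$ is continuous. Since $q_j\notin\operatorname{Fix}(S^1)$ and $\operatorname{Fix}(S^1)$ is invariant, each orbit is disjoint from $\operatorname{Fix}(S^1)$. Hence the orbits have positive mutual distances and positive distance to $\operatorname{Fix}(S^1)$. For $\delta$ small, the sets $\mathcal O^{\delta}(q_j)$ are therefore pairwise disjoint, closed, invariant, and avoid $\operatorname{Fix}(S^1)$; this gives the first claim. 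It also means a map defined separately on each piece glues to a continuous map on the union.

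\textbf{Step 2: an equivariant map near one orbit.} Let $n_j$ be the integer with $2\pi=n_jT_j^*$, where $T_j^*$ is the minimal period of $\theta\mapsto L(\theta)q_j$. As remarked before the lemma, $g_j(L(\theta)q_j)=e^{in_j\theta}$ is a well-defined continuous function on $\mathcal O(q_j)$. Extend it by Tietze's theorem (applied in the metric space $Y$) to a continuous bounded map $\tilde g_j:Y\to\mathbb C$. Then average:
$$\Phi_j(y)=\frac1{2\pi}\int_0^{2\pi}e^{-in_j\theta}\,\tilde g_j(L(\theta)y)\,d\theta .$$
This map has three properties.
\begin{itemize}
\item \emph{Continuity.} $\Phi_j$ is continuous on $Y$, by joint continuity of $(\theta,y)\mapsto L(\theta)y$ and compactness of $S^1$ (uniform continuity on $S^1\times K$ for compact $K$).
\item \emph{Equivariance.} A change of variables gives $\Phi_j(L(\varphi)y)=e^{in_j\varphi}\Phi_j(y)$.
\item \emph{Values on the orbit.} For $y=L(s)q_j$ we have $\tilde g_j(L(\theta)y)=e^{in_j(\theta+s)}$, so $\Phi_j(y)=e^{in_js}$ and $|\Phi_j|=1$ on $\mathcal O(q_j)$.
\end{itemize}

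\textbf{Step 3: choose $\delta$ and glue.} The set $U_j=\{|\Phi_j|>1/2\}$ is open and contains the compact set $\mathcal O(q_j)$. Hence some $\mathcal O^{\delta}(q_j)$ is contained in $U_j$, and we shrink $\delta$ accordingly for all $j$ simultaneously. Let $N=\operatorname{lcm}(n_1,\dots,n_m)$, and on $\mathcal O^{\delta}(q_j)$ set $\Phi=\Phi_j^{N/n_j}$. This $\Phi$ is nonvanishing, continuous on the disjoint closed union, and satisfies $\Phi(L(\theta)y)=e^{iN\theta}\Phi(y)$. Restricting to $X$ keeps it continuous, since the injection $X\to Y$ is continuous. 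The set $\bigcup_j\mathcal O^{\delta}(q_j)\cap X$ is closed in $X$ and invariant under $L|_X$, so it belongs to $\mathcal C$ for the index on $X$, and the conclusion follows.

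The main obstacle is Step 2: producing a genuinely equivariant, nonvanishing extension off the orbit. The orbit map alone is not defined outside $\mathcal O(q_j)$. The averaging trick together with compactness of the orbit, which gives a uniform $\delta$, is what resolves it. The common exponent $N$ is needed because the index definition uses one frequency $n$ for the whole set.
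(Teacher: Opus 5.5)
Your proof is correct and follows the paper's route. You extend the orbit map $L(\theta)q_j\mapsto e^{in\theta}$ by Tietze, average over $S^1$ to make it equivariant, and use compactness of the orbits to get a $\delta$ on which the averaged map does not vanish. The differences are cosmetic: the paper makes one extension on the union of orbits with $n=n_1\cdots n_m$, finds $\delta$ by a sequential contradiction argument, and deduces that $\operatorname{Fix}(S^1)$ is avoided from the nonvanishing of the equivariant map. You work orbit by orbit with $N=\operatorname{lcm}(n_1,\dots,n_m)$ and powers $\Phi_j^{N/n_j}$, and your explicit removal of repeated orbits also covers the case of coinciding orbits, which the paper tacitly excludes.
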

	\proof  First, it is clear that there exists $\delta'>0$ such that $\mathcal O^{\delta'}(q_j)\cap \mathcal O^{\delta'}(q_k)=\emptyset$ for all $1\leq j<k\leq m.$
	For $j=1, ... , m$ fixed,
	we consider the $2\pi$-periodic continuous function
	$$S^1\ni \theta\mapsto L(\theta)q_j\in X\subset Y.$$
	Using that $q_j\notin \operatorname{Fix}(S^1),$ the above function is not constant, so it has a minimal period $T_j>0$, and there exists a positive integer $n_j$ such that $2\pi = n_jT_j.$ Consider $n=n_1n_2\cdots n_m$ and the $\|\cdot\|_Y$-continuous function
	$$\Phi:\bigcup_{j=1}^m\mathcal O(q_j)\to\mathbb C\setminus \{0\}\ : \quad L(\theta)q_j\mapsto e^{in\theta}, \ 1\leq j\leq m, \ \theta\in S^1.$$
	Using Tietze's extension theorem, there exists a $\|\cdot\|_Y$-continuous function $\tilde{\Phi}:Y\to\mathbb C$ such that $\tilde{\Phi}|_{\cup_{j=1}^m\mathcal O(q_j)} = \Phi.$ Now we define the continuous function $\Lambda:Y\to\mathbb C$ by
	$$\Lambda (y) = \frac{1}{2\pi}\int_0^{2\pi}e^{-in\theta}\tilde{\Phi}(L(\theta)y)d\theta \ \mbox{for all} \ y\in Y.$$
	For any $\tau\in [0,2\pi]$ and $y\in Y$, one has that
	\begin{eqnarray*}
		\Lambda(L(\tau)y) &=&  \frac{1}{2\pi}\int_0^{2\pi}e^{-in\theta}\tilde{\Phi}(L(\theta)L(\tau)y)d\theta\\
		&=&  \frac{1}{2\pi}\int_0^{2\pi}e^{-in\theta}\tilde{\Phi}(L(\theta+\tau)y)d\theta\\
		&=& \frac{1}{2\pi}\int_{\tau}^{2\pi+\tau}e^{-in(t-\tau)}\tilde{\Phi}(L(t)y)dt\\
		&=& e^{in\tau}\Lambda(y).
	\end{eqnarray*}
	In particular, for $y=q_j$ with $1\leq j\leq m,$ one has that
	\begin{eqnarray*}
		\Lambda(L(\tau)q_j) = e^{in\tau}\Lambda(q_j) = e^{in\tau} = \Phi(L(\tau)q_j),
	\end{eqnarray*}
	that is
	\begin{eqnarray*}
		\Lambda(q) = \Phi(q) \quad\mbox{for all} \ q\in \bigcup_{j=1}^m\mathcal O(q_j).
	\end{eqnarray*}
	We claim that there exists $\delta\leq\delta'$ such that $\Lambda(q)\neq 0$ for all $q\in \cup_{j=1}^m\mathcal O^{\delta}(q_j)\cap X.$ Assume by contradiction that our claim is not true. Then, for all positive integers $k\geq \delta'^{-1}$, there exists $v_k\in \cup_{j=1}^m\mathcal O^{1/k}(q_j)\cap X$ with $\Lambda(v_k)=0.$ Let $u_k\in \cup_{j=1}^m\mathcal O(q_j)$ be such that $\|v_k - u_k\|_Y\leq 1/k.$ Passing to a subsequence, there exists $u\in  \cup_{j=1}^m\mathcal O(q_j)$ such that $\|u_k - u\|_X\to 0,$ and  so  $\|v_k - u\|_Y\to 0.$ Hence, $\Lambda(v_k)\to\Lambda(u)$ and $\Lambda(u)=\Phi(u)=0,$ contradicting the definition of $\Phi.$ Thus, $\Lambda:  \cup_{j=1}^m\mathcal O^{\delta}(q_j)\cap X\to\mathbb C\setminus \{0\}$ is a $\|\cdot\|_X$-continuous function such that $\Lambda(L(\theta)q)=e^{in\theta}\Lambda(q)$ for all $q\in \cup_{j=1}^m\mathcal O^{\delta}(q_j)\cap X,$ $\theta\in S^1,$ which implies that $\operatorname{ind}(\cup_{j=1}^m\mathcal O^{\delta}(q_j)\cap X) = 1.$ Now the other conclusion is clear. \cqfd

	{\bf The smooth case}
	
	Next, consider an invariant functional $\mathcal J\in C^1(X,\RR)$, and for any positive integer $j$ we define
	$$\mathcal G_j = \{A\subset X: A \ \mbox{compact, invariant with} \ \operatorname{ind}(A)\geq j\}$$
	and the associated Lusternik-Schnirelman levels,
	$$c_j = \inf_{A\in\mathcal G_j}\max_A \mathcal J.$$
	Using property (ii) of the $S^1$-index, it follows that the class $\mathcal G_j$ is stable by equivariant homotopies for any positive integer $j.$
	Notice that $\operatorname{ind}(\mathcal O(0))=+\infty,$ so $\mathcal O(0)\in\mathcal G_j$ and $c_j\leq \mathcal J(0)$ for all positive integers $j.$ 
	Hence, it follows that
	$$-\infty\leq c_1\leq c_2\leq \cdots \leq\mathcal J(0).$$
	For $c\in\RR$, we consider the set of critical points of $\mathcal J$ at the level $c,$
	$$K_c = \{q\in X: \mathcal J(q)=c, \mathcal J'(q)=0\}.$$
	Notice that since $\mathcal J$ is invariant, it follows that if $q\in K_c,$ then $\mathcal O(q)\subset K_c.$ We say that $\mathcal O(q)$ is a {\bf critical orbit} of $\mathcal J$ at the level $c.$
	
	\begin{The}\label{lssmooths1}
		Let $\mathcal J\in C^1(X,\RR)$ be an invariant functional.
		\begin{itemize}
			\item  If for some positive integer $j$ one has that $c_j>-\infty$ and $\mathcal J$ satisfies $(wPS)_{c_j}$-condition, then $K_{c_j}\neq\emptyset.$
			\item  Moreover, if for some positive integers $j<k$ one has that $c_j=c_k=c>-\infty,$  $\mathcal J$ satisfies $(wPS)_{c}$-condition, and $K_c\cap\operatorname{Fix}(S^1)=\emptyset,$ then there exists a sequence $(q_n)\subset X$ such that $\cup_n\mathcal O(q_n)\subset K_c$ and $\mathcal O(q_m)\cap\mathcal O(q_n)=\emptyset$ for all positve integers $m, n$ such that  $m\neq n,$ that is, there exists a sequence of distinct critical orbits of $\mathcal J$ at the level $c.$
		\end{itemize}
	\end{The}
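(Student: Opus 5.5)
For the first assertion I will apply Ghoussoub's location theorem (Theorem \ref{Ghoussoub}) with $\mathcal G=\mathcal G_j$ and the trivial localization $F=X$. The class $\mathcal G_j$ is stable by equivariant homotopies, and $c=c_j>-\infty$ by assumption. The two hypotheses on $F$ are then immediate: $A\cap X=A\neq\emptyset$ because $\operatorname{ind}(A)\geq j\geq 1$, and $\sup_{A}\mathcal J=\max_A\mathcal J\geq c_j$. Theorem \ref{Ghoussoub} therefore gives a sequence $(x_n)$ with $\mathcal J(x_n)\to c_j$ and $\mathcal J'(x_n)\to 0$. The $(wPS)_{c_j}$-condition then yields a subsequence converging in $Y$ to some $q^*\in X$ with $\mathcal J(q^*)=c_j$ and $\mathcal J'(q^*)=0$. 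Hence $K_{c_j}\neq\emptyset$.

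For the second assertion I will argue by contradiction. Suppose there are only finitely many distinct critical orbits at level $c$. By the first part, $K_c\neq\emptyset$, so $K_c=\bigcup_{l=1}^m\mathcal O(q_l)$ with $q_l\in X\setminus\operatorname{Fix}(S^1)$, using $K_c\cap\operatorname{Fix}(S^1)=\emptyset$. Lemma \ref{indexorbit} gives $\delta>0$ such that $N:=\bigcup_{l}\mathcal O^{\delta}(q_l)\cap X$ has $\operatorname{ind}(N)=1$.

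I will take as localization set the closed invariant set $F:=\overline{X\setminus \bigcup_l\mathcal O^{\delta/2}(q_l)}$, closure taken in $X$. This set is invariant because the $\mathcal O^{\delta/2}(q_l)$ are invariant and $L$ is isometric. I then check the hypotheses of Theorem \ref{Ghoussoub} for $\mathcal G=\mathcal G_k$ and $F$:
\begin{itemize}
\item The level is right: $c_k=c$.
\item Take $A\in\mathcal G_k$ and set $A_1:=A\cap N'$, where $N'=\bigcup_l\mathcal O^{\delta/2}(q_l)\cap X$. Then $A\subset A_1\cup(A\cap F)$. Here $A_1$ is compact and invariant, since $\mathcal O^{\delta/2}$ is $Y$-closed and $X\hookrightarrow Y$ is continuous. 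Also $\operatorname{ind}(A_1)\leq\operatorname{ind}(N)=1$ by monotonicity (property (ii) with the inclusion map).
\item By subadditivity, $\operatorname{ind}(A\cap F)\geq k-1\geq j$. In particular $A\cap F\neq\emptyset$. Moreover $A\cap F$ is compact, invariant, and belongs to $\mathcal G_j$, so $\max_{A\cap F}\mathcal J\geq c_j=c$.
\end{itemize}

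Theorem \ref{Ghoussoub} then yields $(x_n)$ with $\mathcal J(x_n)\to c$, $\mathcal J'(x_n)\to 0$ and $\operatorname{dist}_X(x_n,F)\to 0$. By $(wPS)_c$, a subsequence converges in $Y$ to some $q^*\in K_c$. So $q^*\in\mathcal O(q_l)$ for some $l$, and $\|x_{n}-q^*\|_Y\to0$ along that subsequence.

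The main obstacle is producing the contradiction, because localization holds in the $X$-distance while convergence holds only in $Y$. The plan is to pick $y_n\in F$ with $\|x_n-y_n\|_X\to0$. Since $\|\cdot\|_Y\leq C\|\cdot\|_X$ by continuity of the injection, this gives $\|y_n-q^*\|_Y\to0$. On the other hand, points of $F$ lie in the $X$-closure of the complement of $\mathcal O^{\delta/2}(q_l)$, so they satisfy $\operatorname{dist}_Y(y,\mathcal O(q_l))\geq\delta/2$ (the $Y$-distance is $X$-continuous). This contradicts $y_n\to q^*\in\mathcal O(q_l)$ in $Y$, so $K_c$ must contain infinitely many distinct orbits. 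Choosing representatives $q_n$ of these orbits gives the required sequence.

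The delicate point to verify is precisely this compatibility between $X$-closures and $Y$-distances, together with the choice of the radius $\delta/2<\delta$. The index bound must use the $Y$-closed neighbourhood $\mathcal O^{\delta/2}$ so that Lemma \ref{indexorbit} applies, while $F$ still stays a fixed $Y$-distance away from the orbits.
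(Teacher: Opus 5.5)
Your proposal is correct and follows the paper's proof essentially step by step. The first part applies Ghoussoub's theorem with $F=X$ and $\mathcal G_j$, then uses $(wPS)_{c_j}$; the second part argues by contradiction via Lemma \ref{indexorbit}, gets $\operatorname{ind}(A\cap F)\geq k-1\geq j$ from subadditivity, applies Ghoussoub with $\mathcal G_k$ and an $F$ kept at positive $Y$-distance from the finitely many orbits, and passes from $\operatorname{dist}_X(x_n,F)\to 0$ to $Y$-convergence. The only difference is cosmetic: the paper takes directly $F=\{q\in X:\operatorname{dist}_Y(q,\cup_m\mathcal O(q_m))\geq\delta\}$, which is already closed in $X$, so your $\delta/2$ shrinking and $X$-closure are harmless but unnecessary.
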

	\proof   We consider a positive integer $j$ such that $c_j>-\infty$ and $\mathcal J$ satisfies $(wPS)_{c_j}$-condition. Using Ghoussoub's location theorem, that is Theorem \ref{Ghoussoub}, with $F=X$ and $\mathcal G = \mathcal G_j,$ it follows that there exists a sequence $(q_n)\subset X$ such that 
	$$\mathcal J(q_n)\to c_j, \quad \mathcal J'(q_n)\to 0.$$
	Using that $\mathcal J$ satisfies $(wPS)_{c_j}$-condition, passing to a subsequence it follows that there exists $q\in X$ such that $\|q_n - q\|_Y\to 0,$ $\mathcal J(q)=c_j$, and $\mathcal J'(q)=0.$ In particular, $q\in K_{c_j},$ so  $K_{c_j}\neq\emptyset.$
	
	For the second assertion, assume by contradiction that  $K_c$ contains only a finite number of orbits $\mathcal O(q_1), \cdots , \mathcal O(q_l).$ Now let $\delta>0$ be given by Lemma \ref{indexorbit}, so one has that  
	$$\bigcup_{m=1}^l\mathcal O^{\delta}(q_m)\cap \operatorname{Fix}(S^1)=\emptyset, \quad \operatorname{ind}\left(\bigcup_{m=1}^l\mathcal O^{\delta}(q_m)\cap X\right) = 1.$$
	Consider in $X$ the invariant set $F$ given by
	$$F=\{q\in X: \ \operatorname{dist}_Y(q,\cup_{m=1}^l\mathcal O(q_m))\geq\delta\}.$$
	Using that the canonical injection from $X$ into $Y$ is a compact operator and 
	$$F= X\cap \ (\operatorname{dist}_Y(\cdot,\cup_{m=1}^l\mathcal O(q_m)))^{-1}[\delta,\infty),$$
	it follows that $F$ is closed in $X.$
	Notice that by our assumption,
	$$F\cap K_c=\emptyset.$$
	For any fixed $A\in\mathcal G_k$, it is clear that
	$$A\subset (A\cap F) \cup (\cup_{m=1}^l\mathcal O^{\delta}(q_m)\cap X),$$
	and using property (iv) of the $S^1$-index we deduce that
	\begin{eqnarray*}
		j<k\leq  \operatorname{ind}(A) \leq \operatorname{ind}(A\cap F) + \operatorname{ind}(\cup_{m=1}^l\mathcal O^{\delta}(q_m)\cap X) =\operatorname{ind}(A\cap F) +1,
	\end{eqnarray*}
	which implies that
	$$\operatorname{ind}(A\cap F)\geq j,$$
	so $A\cap F\neq\emptyset,$ $A\cap F\in \mathcal G_j$, and $\max\limits_{A\cap F}\mathcal J\geq c_j=c.$
	Using Ghoussoub's location theorem, that is Theorem \ref{Ghoussoub}, with $F$ defined above and $\mathcal G = \mathcal G_k,$ it follows that there exists a sequence $(u_n)\subset X$ such that 
	$$\mathcal J(u_n)\to c, \quad \mathcal J'(u_n)\to 0, \ \mbox{and} \ \operatorname{dist}_X(u_n,F)\to 0.$$
	Using that $\mathcal J$ satisfies $(wPS)_{c}$-condition, passing to a subsequence it follows that there exists $u\in X$ such that $\|u_n - u\|_Y\to 0,$ $\mathcal J(u)=c$, and $\mathcal J'(u)=0.$ Let $(v_n)\subset F$ be a sequence in $F$ such that $\|u_n - v_n\|_X\to 0,$ hence $\|v_n - u\|_Y\to 0.$ Using that $\operatorname{dist}_Y(\cdot,\cup_{m=1}^l\mathcal O(q_m))$ is a continuous function on the Banach space $(Y,\|\cdot\|_Y)$ and $u\in X,$ it follows that $u\in F$ and $\mathcal O(u)\subset F\cap K_c,$ contradicting $F\cap K_c=\emptyset.$ Thus, the second assertion is proved. \cqfd
	
	\begin{Rem}
		{\em The above result, that is Theorem \ref{lssmooths1}, holds true without any change in the proof if $X$ is a Banach space.  We need $X$ to be a Hilbert space in order to use the Ekeland-Lasry result (Lemma \ref{ekelas}).}
	\end{Rem}
	
	{\bf The nonsmooth case}
	
	Now let us return to our nonsmooth functional $\mathcal I=\Psi+\mathcal F$ which satisfies the main hypothesis $(H)$, and  assume that the functionals $\Psi$ and $\mathcal F$ are invariant (under the isometric action $L|_{X}$ of the compact group $S^1$ over the Hilbert space $X$). For a fixed real number $c\in\RR,$ we consider, as in the smooth case,
	$$K_c(\mathcal I) = \{q\in X: \mathcal I(q)=c, q \ \mbox {is a critical point of} \ \mathcal I\}.$$
	Moreover, we consider
	$$K(\mathcal I) = \{q\in X:  q \ \mbox {is a critical point of} \ \mathcal I\}.$$
	Using that $\Psi$ and $\mathcal F$ are invariant, it is easy to prove that if $q\in K_{c}(\mathcal I),$ then $\mathcal O(q)\subset K_{c}(\mathcal I).$ As in the smooth case, for any positive integer $j$, we consider the Lusternik-Schnirelman levels associated to the nonsmooth functional $\mathcal I,$
	$$c_j(\mathcal I) = \inf_{A\in\mathcal G_j}\sup_A \mathcal I.$$
	One has that
	$$-\infty\leq c_1(\mathcal I)\leq c_2(\mathcal I)\leq \cdots \leq\mathcal I(0).$$
	Moreover, following Ekeland-Lasry \cite{EkeLasam}, we consider the set 
	$$\Omega = \{q\in X: \mathcal I(q)<0\}.$$
	Our  main abstract  result, which will be applied to the Poincar\'e action \mbox{functional}, goes as follows.
	
	\begin{The}\label{lsnonsmooths1}
		Assume that the invariant nonsmooth functional $\mathcal I$ is bounded from below, satisfies $(H),$ the Ekeland-Lasry condition $(*),$  $\mathcal I(0)=0,$ there exists some $\omega<0$ such that $\mathcal I$ satisfies $(wPS)_d$-condition for all $d<\omega,$ and $$\operatorname{Fix}(S^1) \cap\Omega\cap K(\mathcal I) = \emptyset.$$
		Consider $0<\varepsilon<\alpha^{-1}.$
		\begin{itemize}
			\item  One has that  $K_{c_j(\mathcal I_{\varepsilon})}(\mathcal I_{\varepsilon}) = K_{c_j(\mathcal I_{\varepsilon})}(\mathcal I)$ is a nonempty set for all positive integers $j$ such that $c_j(\mathcal I_{\varepsilon})<\omega.$
			\item Moreover, if 
			there exist   positive integers $j, k$ such that 
			\begin{eqnarray*}
				j<k \ \mbox{and} \ c_j(\mathcal I_{\varepsilon}) = c_k(\mathcal I_{\varepsilon})<\omega,
			\end{eqnarray*}
			then  $ K_{c_j(\mathcal I_{\varepsilon})}(\mathcal I)$ contains infinitely many orbits. 
			\item In particular, if for some positive integer $k$ one has that 
			$
			c_k(\mathcal I_{\varepsilon})<\omega,
			$
			then  $ K(\mathcal I)$ contains at least $k$ orbits at negative levels.
		\end{itemize}
	\end{The}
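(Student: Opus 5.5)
The strategy is to apply Theorem \ref{lssmooths1} to the regularized functional $\mathcal J=\mathcal I_{\varepsilon}$ and then transfer the critical points back to $\mathcal I$ through Lemma \ref{ekelas}.

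\textbf{Setting up the smooth problem.} By Lemma \ref{ekelas} (EL1) we have $\mathcal I_{\varepsilon}\in C^1(X,\RR)$. By Lemma \ref{invfunct} it is invariant. By (EL2), $\inf_X\mathcal I\le \mathcal I_{\varepsilon}\le \mathcal I$, so every level $c_j(\mathcal I_{\varepsilon})$ is finite, being at least $\inf_X \mathcal I>-\infty$.

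\textbf{Identification of critical sets.} Fix $c$. By (EL3), $\mathcal I_{\varepsilon}'(q)=0$ holds if and only if $q$ is a critical point of $\mathcal I$, and in that case $\mathcal I_{\varepsilon}(q)=\mathcal I(q)$. Hence $K_c(\mathcal I_{\varepsilon})=K_c(\mathcal I)$ for every $c\in\RR$.

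\textbf{First item.} Take $j$ with $c_j(\mathcal I_{\varepsilon})<\omega$. Then $\mathcal I$ satisfies $(wPS)_{c_j(\mathcal I_\varepsilon)}$, so Lemma \ref{iepsilonps} gives that $\mathcal I_{\varepsilon}$ satisfies the same condition. The first part of Theorem \ref{lssmooths1} now yields $K_{c_j(\mathcal I_{\varepsilon})}(\mathcal I_{\varepsilon})\ne\emptyset$, and this set equals $K_{c_j(\mathcal I_{\varepsilon})}(\mathcal I)$ by the identification above.

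\textbf{Second item.} Suppose $j<k$ and $c:=c_j(\mathcal I_\varepsilon)=c_k(\mathcal I_\varepsilon)<\omega$. Compactness holds as before. The only extra hypothesis of Theorem \ref{lssmooths1} to check is $K_c(\mathcal I_\varepsilon)\cap \operatorname{Fix}(S^1)=\emptyset$. Since $c<\omega<0$, any $q\in K_c(\mathcal I_\varepsilon)=K_c(\mathcal I)$ satisfies $\mathcal I(q)=c<0$, so $q\in\Omega\cap K(\mathcal I)$. The assumption $\operatorname{Fix}(S^1)\cap\Omega\cap K(\mathcal I)=\emptyset$ then excludes $q\in\operatorname{Fix}(S^1)$. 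The second part of Theorem \ref{lssmooths1} gives infinitely many distinct orbits in $K_c(\mathcal I_\varepsilon)=K_c(\mathcal I)$.

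\textbf{Third item.} Suppose $c_k(\mathcal I_\varepsilon)<\omega$. Since $c_1\le\cdots\le c_k$, all of $c_1(\mathcal I_\varepsilon),\dots,c_k(\mathcal I_\varepsilon)$ are $<\omega<0$.
\begin{itemize}
\item If these $k$ values are pairwise distinct, the first item gives a critical orbit of $\mathcal I$ at each of them. Orbits at different levels are distinct, so we obtain $k$ orbits.
\item If two of them coincide, say $c_i=c_l$ with $i<l\le k$, the second item gives infinitely many orbits.
\end{itemize}
In either case there are at least $k$ orbits at negative levels. They are orbits of genuine critical points of $\mathcal I$, since each lies in $K_c(\mathcal I)$ with $c<0$.

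\textbf{Main obstacle.} The one point that needs care is the equality $K_c(\mathcal I_\varepsilon)=K_c(\mathcal I)$. We must use both directions of (EL3): a critical point of $\mathcal I$ at level $c$ is also a critical point of $\mathcal I_\varepsilon$ at the same level $c$, because $\mathcal I_\varepsilon(q)=\mathcal I(q)$ there. The converse direction uses that $q\in D(\Psi)$ is part of the conclusion of (EL3).
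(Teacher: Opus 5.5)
Your proposal is correct and takes the same route as the paper. You regularize to $\mathcal I_{\varepsilon}$ and use (EL1), invariance, Lemma \ref{iepsilonps} and (EL2) to meet the hypotheses of Theorem \ref{lssmooths1}, then transfer the critical sets back through (EL3). You also write out two steps the paper leaves implicit: that $K_c(\mathcal I_{\varepsilon})\cap\operatorname{Fix}(S^1)=\emptyset$ follows from $c<\omega<0$ and the hypothesis $\operatorname{Fix}(S^1)\cap\Omega\cap K(\mathcal I)=\emptyset$, and the case split (distinct versus coinciding levels) behind the third item.
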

	\proof From Lemma \ref{ekelas} - $(EL1)$ it follows that $\mathcal I_{\varepsilon}\in C^1(X,\RR),$ and using Lemma 7 in \cite{EkeLasam}, we have that $\mathcal I_{\varepsilon}$ is invariant. Moreover, using Lemma \ref{iepsilonps} and Lemma \ref{ekelas} - $(EL2)$ we deduce that $\mathcal I_{\varepsilon}$ satisfies $(wPS)_d$-condition for all $d<\omega.$ Now the first and second parts of the proof follow immediately from Theorem \ref{lssmooths1} and Lemma \ref{ekelas} - $(EL3).$ The last part follows  from the two previous parts. \cqfd
	We recall the following result (see Proposition 5.3 in \cite{MawWil}).
	\begin{Lem}\label{computindexmw}
		Let $Z$ be a finite-dimensional invariant subspace of $X$ and let $D$ be an open bounded invariant neighbourhood of $0$ in $Z.$ If
		$Z\cap \operatorname{Fix}(S^1) = \{0\},$
		then
		$$\operatorname{ind}(\partial D) = \frac{1}{2}\mbox{dim}(Z).$$
	\end{Lem}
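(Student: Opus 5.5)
The plan is to decompose $Z$ under the representation and get the upper bound from an explicit map. The lower bound should come from a Borsuk--Ulam type argument for a finite cyclic subgroup of $S^1$. Write $m=\frac12\dim Z$ for the target value.

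\textbf{Step 1: decomposition of $Z$.} Since $L|_Z$ is a continuous isometric (hence orthogonal, after averaging the inner product) representation of the compact abelian group $S^1$ on the finite-dimensional space $Z$, it splits as an orthogonal sum of irreducible pieces. The irreducible real representations of $S^1$ are the trivial one and the planes $\mathbb C$ with $\theta\cdot z=e^{ik\theta}z$, $k\ge 1$. The hypothesis $Z\cap\operatorname{Fix}(S^1)=\{0\}$ excludes trivial summands. Hence $\dim Z=2m$ is even, and there are coordinates $z=(z_1,\dots,z_m)\in\mathbb C^m\cong Z$ and integers $k_j\ge1$ with
$$L(\theta)z=(e^{ik_1\theta}z_1,\dots,e^{ik_m\theta}z_m).$$

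\textbf{Step 2: upper bound $\operatorname{ind}(\partial D)\le m$.} Set $n=k_1k_2\cdots k_m$ and $\Phi(z)=(z_1^{n/k_1},\dots,z_m^{n/k_m})$. This map is continuous and satisfies $\Phi(L(\theta)z)=e^{in\theta}\Phi(z)$. It vanishes only at $z=0$, and $0\notin\partial D$ because $D$ is an open neighbourhood of $0$. So $\Phi\in C(\partial D,\mathbb C^m\setminus\{0\})$ is admissible, and $\operatorname{ind}(\partial D)\le m$.

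\textbf{Step 3: lower bound $\operatorname{ind}(\partial D)\ge m$.} Suppose instead that there is an admissible $\Phi:\partial D\to\mathbb C^k\setminus\{0\}$ with $k<m$ and exponent $n'\ge1$.
\begin{itemize}
\item Choose a prime $p$ with $p>n'$ and $p>\max_j k_j$, and restrict everything to the subgroup $\mathbb Z_p=\{2\pi l/p\}\subset S^1$. The generator acts on $Z\setminus\{0\}$ without fixed points, since $e^{2\pi i k_j l/p}\ne1$ for $0<l<p$. It also acts on $\mathbb C^k\setminus\{0\}$ freely through $e^{2\pi i n' l/p}$, because $p\nmid n'$.
\item Thus $\Phi$ is a $\mathbb Z_p$-equivariant map between free $\mathbb Z_p$-spaces, from the boundary of a bounded invariant neighbourhood of $0$ in $\mathbb R^{2m}$ to $\mathbb C^k\setminus\{0\}\simeq S^{2k-1}$.
\item I then invoke the $\mathbb Z_p$-version of Borsuk--Ulam, in its degree form. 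Extend $\Phi$ equivariantly to $\overline D$: use Tietze, then average over $\mathbb Z_p$ exactly as $\Lambda$ was built from $\tilde\Phi$ in Lemma \ref{indexorbit}. Append $2(m-k)$ zero real coordinates to get a map $g:\overline D\to\mathbb C^m$.
\item The Brouwer degree $\deg(g,D,0)$ is well defined because $g\ne0$ on $\partial D$. It is $\equiv \deg(\Phi_0,D,0)\not\equiv0 \pmod p$ for any equivariant map $\Phi_0$ that is nonvanishing on $\partial D$; take for $\Phi_0$ a suitable linear equivariant isomorphism, whose degree is $\pm1$.
\item On the other hand, $g$ takes values in $\mathbb C^k\times\{0\}$, a proper subspace. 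Perturbing the target point off that subspace shows $\deg(g,D,0)=0$, a contradiction.
\end{itemize}

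The main obstacle is Step 3, specifically the congruence $\deg(g,D,0)\equiv\deg(\Phi_0,D,0)\pmod p$. One route is the standard equivariant-degree argument: free orbits of zeros contribute in multiples of $p$ after a generic equivariant perturbation, and the only fixed point is $0$. A second route first deforms $\partial D$ equivariantly onto a sphere, but that needs care since $D$ need not be star-shaped. If either route gets delicate, I would simply quote Proposition 5.3 in \cite{MawWil}, equivalently the Fadell--Rabinowitz/Bartsch Borsuk--Ulam theorem, which is exactly this statement.
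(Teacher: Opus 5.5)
The paper does not prove this lemma; it only recalls Proposition 5.3 of Mawhin--Willem, which is also your fallback. Your self-contained outline is the standard strategy. Steps 1 and 2 are correct, and so is the final step (values in $\mathbb C^k\times\{0\}$ force $\deg(g,D,0)=0$).

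\textbf{The gap.} One step of Step 3 fails as written. Since $g(L(2\pi/p)z)=\zeta^{n'}g(z)$ with $\zeta=e^{2\pi i/p}$, the comparison map $\Phi_0$ must intertwine the action $z_j\mapsto\zeta^{k_j}z_j$ on $Z$ with multiplication by $\zeta^{n'}$ on all of $\mathbb C^m$. A \emph{linear} equivariant isomorphism of this kind exists only if each real $\mathbb Z_p$-representation $z\mapsto\zeta^{k_j}z$ is equivalent to $z\mapsto\zeta^{n'}z$. That means $k_j\equiv\pm n'\pmod p$ for every $j$, which fails in general. If the weights $1,2,3$ occur (as they do for the space $Z_m$ of the application once $m\ge 3$), your constraint forces $p>3$. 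But $1\equiv\pm n'$ and $2\equiv\pm n'$ together force $p=3$. So no equivariant comparison map of degree $\pm1$ is available.

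\textbf{The repair.} Use a nonlinear comparison map. Choose $a_j\in\{1,\dots,p-1\}$ with $a_jk_j\equiv n'\pmod p$; this is possible since $p\nmid k_j$, and then $p\nmid a_j$ because $p\nmid n'$. Put $\Phi_0(z)=(z_1^{a_1},\dots,z_m^{a_m})$. It has exactly the equivariance of $g$ and vanishes only at $0$. Moreover $\deg(\Phi_0,D,0)=\prod_j a_j\not\equiv 0\pmod p$, which is all the contradiction needs.

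\textbf{The congruence you flagged.} Zeros of $g$ may accumulate at the fixed point $0$, where orbit counting breaks down, so glue rather than compare directly.
\begin{itemize}
\item Take a small ball $B_\rho\subset D$; it is invariant because the action is orthogonal.
\item By Tietze and averaging, build an equivariant $G$ equal to $\Phi_0$ on $\overline{B_\rho}$ and to $g$ on $\partial D$.
\item Then $\deg(g,D,0)=\deg(G,D,0)=\deg(\Phi_0,B_\rho,0)+\deg(G,D\setminus\overline{B_\rho},0)$.
\item On $D\setminus\overline{B_\rho}$ the action is free. Both $L(2\pi/p)$ and multiplication by $\zeta^{n'}$ preserve orientation, since each is joined to the identity through the circle action.
\item After a smooth equivariant perturbation transverse to $0$ (for instance as a section over the quotient manifold), the zeros form $p$-point orbits with equal local indices. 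Hence the last degree is $\equiv 0\pmod p$.
\end{itemize}
With these two corrections your argument proves the lemma without appealing to the citation.
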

	From the above Lemma \ref{computindexmw} and our main abtract result, Theorem \ref{lsnonsmooths1}, we deduce the following important result.
	\begin{Cor}\label{maincorabstract}
		Assume that the invariant nonsmooth functional $\mathcal I$ is bounded from below and satisfies $(H),$ the  Ekeland-Lasry condition $(*),$ $\mathcal I(0)=0,$ there exists $\omega<0$ such that $\mathcal I$ satisfies $(wPS)_d$ for all $d<\omega$, and $\operatorname{Fix}(S^1) \cap\Omega \cap K(\mathcal I)= \emptyset.$ Let $Z$ be a (2k)-dimensional invariant subspace of $X$ such that $Z\cap \operatorname{Fix}(S^1) = \{0\},$ and let $D$ be an open bounded invariant neighborhood of $0$ in $Z$ with $\sup_{\partial D}\mathcal I<\omega.$ Then, $ K(\mathcal I)$ contains at least $k$ orbits at negative levels.
	\end{Cor}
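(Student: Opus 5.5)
The plan is to reduce the corollary to the last bullet of Theorem \ref{lsnonsmooths1}. Fix $0<\varepsilon<\alpha^{-1}$. It then suffices to show that $c_k(\mathcal I_{\varepsilon})<\omega$, and for this we exhibit a single set $A\in\mathcal G_k$ with $\max_A \mathcal I_{\varepsilon}<\omega$.

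The natural candidate is $A=\partial D$, where the boundary is taken relative to $Z$. First we check that $\partial D\in\mathcal G_k$.
\begin{itemize}
\item Since $D$ is bounded in the finite-dimensional space $Z$, the set $\partial D$ is closed and bounded in $Z$, hence compact in $X$.
\item Each $L(\theta)|_Z$ is a homeomorphism of $Z$ (it is linear with inverse $L(-\theta)$, and $Z$ is invariant) and $L(\theta)D=D$, so $L(\theta)\partial D=\partial D$ and $\partial D$ is invariant.
\item Since $Z\cap\operatorname{Fix}(S^1)=\{0\}$ and $\dim Z=2k$, Lemma \ref{computindexmw} gives $\operatorname{ind}(\partial D)=k$.
\end{itemize}
Hence $\partial D\in\mathcal G_k$.

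Next we compare $\mathcal I_{\varepsilon}$ with $\mathcal I$ on $\partial D$. By $(EL2)$ of Lemma \ref{ekelas}, $\mathcal I_{\varepsilon}\le\mathcal I$ pointwise, so $\sup_{\partial D}\mathcal I_{\varepsilon}\le\sup_{\partial D}\mathcal I<\omega$. Since $\mathcal I_{\varepsilon}$ is continuous by $(EL1)$ and $\partial D$ is compact, the supremum is a maximum. Therefore
$$c_k(\mathcal I_{\varepsilon})\le\max_{\partial D}\mathcal I_{\varepsilon}<\omega.$$
All the remaining hypotheses of Theorem \ref{lsnonsmooths1} are assumed verbatim in the corollary, so its third bullet yields at least $k$ critical orbits of $\mathcal I$ at negative levels.

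The only real point of care is the index computation. Lemma \ref{computindexmw} needs $D$ to be an open bounded invariant neighbourhood of $0$ in $Z$, which is exactly what is assumed. We must also note that $\partial D$ is nonempty, which is automatic since $k\ge1$ and $D$ is a bounded neighbourhood of $0$ in $Z\neq\{0\}$. Beyond this the argument is a direct assembly of earlier results.
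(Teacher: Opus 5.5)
Your proposal is correct and follows the paper's proof exactly. Like the paper, you get $\operatorname{ind}(\partial D)=k$ from Lemma \ref{computindexmw}, use $(EL2)$ to obtain $\max_{\partial D}\mathcal I_{\varepsilon}<\omega$ and hence $c_k(\mathcal I_{\varepsilon})<\omega$, and then invoke the last part of Theorem \ref{lsnonsmooths1}; your extra checks (taking $\partial D$ relative to $Z$, its compactness and invariance, and that the maximum is attained) only make explicit what the paper leaves implicit.
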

	\proof Using Lemma \ref{computindexmw}, we deduce that $\operatorname{ind}(\partial D)=k.$ But $\sup_{\partial D}\mathcal I<\omega,$  so using Lemma \ref{ekelas} - $(EL2),$ one has that $\max\limits_{\partial D}\mathcal I_{\varepsilon}<\omega,$ which together with $\partial D\in\mathcal G_k$ implies that $c_k(\mathcal I_{\varepsilon})<\omega.$ Now the result follows from the last part of \mbox{Theorem \ref{lsnonsmooths1}.} \cqfd

	\section{Poincar\'e action functional on $H^1_T$}
	\subsection{Function spaces}
	We denote by $C_T$ the Banach space of continuous functions  $q:[0,T]\to\RR^3$ with $q(0)=q(T),$ endowed with the usual norm
	$$||q||_{\infty}=\max_{[0,T]}|q| \quad\mbox{for all} \ q\in C_T.$$
	The norm in $L^{\infty}(0, T)$ will also be denoted by $||{\cdot}||_{\infty}$. If $W^{1,\infty}(0,T)$ denotes the space of all real valued Lipschitz functions in $[0,T]$ (or, equivalently, the absolutely continuous functions on $[0,T]$ with bounded derivatives a.e.), we consider the Banach space
	\begin{eqnarray*}
		W^{1,\infty}_T=\{ q\in [W^{1,\infty}(0,T)]^3: q(0)=q(T)\}
	\end{eqnarray*}
	endowed with the usual norm $\|\cdot\|_{1,\infty}$ given by
	$$
	\|q\|_{1,\infty}=\|q\|_{\infty} + \|q'\|_{\infty}  \quad (q\in W^{1,\infty}_T).
	$$
	The Sobolev space $H^1_T$ is the space of functions $q\in L^2(0,T;\RR^3)$ having a weak derivative $q'\in  L^2(0,T;\RR^3)$ and which are $T$-periodic, that is $q(0)=q(T),$ or, equivalently, the space of absolutely continuous functions $q:[0,T]\to\RR^3$ with $q'\in  L^2(0,T;\RR^3)$ and which are $T$-periodic. The Sobolev space $H^1_T$ is a Hilbert space with the inner product
	$$(q|\varphi)_{1,2}=\int_0^T[q(t)\cdot\varphi(t) + q'(t)\cdot\varphi'(t)]dt,$$
	and we shall denote the corresponding norm by $\|\cdot\|_{1,2}.$ Notice that $W^{1,\infty}_T\subset H^1_T$ continuously and the Arzel\`a-Ascoli theorem implies that $H^1_T\subset C_T$ compactly.
	
	\subsection{The nonsmooth part of the action functional}
	Consider
	\begin{eqnarray*}
		D(\Psi_*)= \{q\in W^{1,\infty}_T: \|q'\|_{\infty}\leq 1\},
	\end{eqnarray*}
	and  $\Psi_*: H^{1}_T\to (-\infty,+\infty]$ given   by
	\[
	\Psi_*(q)=\left\{ \begin{array}{ll}
		\displaystyle \int_0^T[1-\sqrt{1-|q'(t)|^2}]dt , &\mbox{if }  q\in D(\Psi_*)
		,
		\\
		\\
		+\infty , &\mbox{if } \ q\in  H^{1}_T \setminus D(\Psi_*).
	\end{array}
	\right.
	\]
	Following \cite{mawsurvey, bjmrlma} one has the following result:
	
	\begin{Lem}  \label{propkpsi}
		\begin{enumerate}
			\item[(i)] The set $D(\Psi_*)$ is convex and closed in $C_T.$ Moreover, if $(q_n)$  is a sequence in $D(\Psi_*)$ converging pointwise in $[0,T]$ to a continuous function $q:[0,T]\to\RR^3$,  then $q\in D(\Psi_*)$ and $q'_n\to q'$ in the $w^*$-topology $\sigma(L^{\infty},L^1).$
			\item[(ii)] If $(q_n)$  is a sequence in $D(\Psi_*)$ converging in $C_T$ to $q$, then $q\in\mathcal D(\Psi_*)$ and
			$$
			\Psi_*(q)\leq \liminf_{n\to\infty}\Psi_*(q_n).
			$$
			In particular, the functional $\Psi_*$ is weakly lower semicontinuous and convex on $H^{1}_T$. 
		\end{enumerate}
	\end{Lem}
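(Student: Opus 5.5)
The argument is elementary and works directly with the pointwise and uniform limits. For the convexity in (i), take $q_1,q_2\in D(\Psi_*)$ and $s\in[0,1]$. Then $sq_1+(1-s)q_2$ is Lipschitz and $T$-periodic, and its derivative satisfies $|sq_1'+(1-s)q_2'|\le s|q_1'|+(1-s)|q_2'|\le 1$ a.e. For closedness in $C_T$ and the pointwise statement, let $q_n\in D(\Psi_*)$ converge pointwise to a continuous $q$. Each $q_n$ is $1$-Lipschitz, since $|q_n(t)-q_n(s)|\le\int_s^t|q_n'|\le|t-s|$, and passing to the pointwise limit gives that $q$ is $1$-Lipschitz and $T$-periodic. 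Hence $q\in W^{1,\infty}_T$ with $\|q'\|_\infty\le 1$, that is, $q\in D(\Psi_*)$.

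For the $w^*$-convergence of the derivatives, $(q_n')$ is bounded in $L^\infty$ by $1$. For every $t$ we have $\int_0^t q_n'=q_n(t)-q_n(0)\to q(t)-q(0)=\int_0^t q'$, so $q_n'\to q'$ when tested against indicators of intervals $[0,t]$. By linearity this extends to step functions, which are dense in $L^1$. The uniform $L^\infty$ bound then gives convergence against every $h\in L^1$ by a standard $\varepsilon/3$ argument. As a consequence the whole sequence converges, with no need to pass to a subsequence.

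For (ii), uniform convergence implies pointwise convergence, so (i) gives $q\in D(\Psi_*)$ and $q_n'\stackrel{*}{\rightharpoonup}q'$. The map $g(v)=1-\sqrt{1-|v|^2}$ is convex and continuous on the closed unit ball, so the functional $v\mapsto\int_0^T g(v)$ is convex on $\{v\in L^\infty:\|v\|_\infty\le1\}$. It is also strongly $L^1$-lower semicontinuous on this set by Fatou's lemma applied along a.e.\ convergent subsequences (with $g\ge0$), and therefore weakly $L^1$-lsc by Mazur's lemma. Since $w^*$-convergence in $L^\infty$ implies weak convergence in $L^1(0,T)$ on a finite interval, the inequality $\Psi_*(q)\le\liminf\Psi_*(q_n)$ follows. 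If one prefers to avoid Mazur, the supporting-hyperplane inequality $g(v_n)\ge g(v)+\nabla g(v)\cdot(v_n-v)$ can be used instead. This requires handling the points where $|v|=1$, where $\nabla g$ blows up, and I would circumvent that by using $g_\delta(v)=g((1-\delta)v)$ and letting $\delta\to0$ via monotone convergence.

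Convexity of $\Psi_*$ on $H^1_T$ follows from the convexity of $D(\Psi_*)$ and of $g$, together with the value $+\infty$ outside $D(\Psi_*)$. For weak lower semicontinuity, let $q_n\rightharpoonup q$ in $H^1_T$, and assume without loss of generality that $\liminf\Psi_*(q_n)<\infty$, passing to a subsequence realizing the liminf with $q_n\in D(\Psi_*)$. The compact embedding $H^1_T\subset C_T$ gives $q_n\to q$ in $C_T$, so the first part of (ii) applies. I expect the main obstacle to be the lower semicontinuity of the integral under merely weak-$*$ convergence of the derivatives. The difficulty is the degeneracy of $g$ at $|v|=1$, and convexity combined with Mazur or Fatou is the route I would take to handle it.
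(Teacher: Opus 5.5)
Your proof is correct. The paper does not prove this lemma at all. It imports it from the literature ("Following \dots one has the following result") and uses it as a black box. So your proposal supplies a self-contained proof rather than a variant of one in the text.

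The individual steps hold up.
\begin{itemize}
\item The bound $|q_n(t)-q_n(s)|\le|t-s|$ survives pointwise limits and gives $q\in D(\Psi_*)$ directly.
\item Testing $q_n'$ against $\chi_{[0,t]}e_i$, extending by linearity to step functions, and using $\|q_n'\|_\infty\le 1$ gives $w^*$-convergence of the whole sequence. This avoids a Banach--Alaoglu extraction followed by identification of the limit.
\item Since $\sigma(L^\infty,L^1)$-convergence on a bounded interval implies weak $L^1$-convergence, the inequality $\Psi_*(q)\le\liminf\Psi_*(q_n)$ reduces to weak lower semicontinuity of the convex integral $v\mapsto\int_0^T g(v)\,dt$ on the unit ball of $L^\infty$. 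You obtain this correctly through Mazur: the convex combinations stay in the ball, and $q'$ lies in it by (i).
\item Your alternative with $g_\delta(v)=g((1-\delta)v)$ also works. Because $g$ is nondecreasing in $|v|$, one has $g_\delta\le g$ and $g_\delta\uparrow g$ as $\delta\downarrow 0$, while $\nabla g_\delta(q')\in L^\infty$.
\end{itemize}

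Two minor comments. First, since $0\le g\le 1$ is continuous on the closed ball, dominated convergence shows the integral is actually $L^1$-continuous there, so Fatou is more than you need. Second, in the last step you establish sequential weak lower semicontinuity on $H^1_T$. Because $\Psi_*$ is convex, this is equivalent to lower semicontinuity for the weak topology, so nothing is lost relative to the statement.
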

	
	\subsection{The smooth  part of the action functional}
	Let  $\mathcal F_*:H^1_T\to\RR$ be given by
	\begin{eqnarray*}
		\mathcal F_*(q):=  \int_0^T[q'(t)\cdot W(t,q(t)) - V(t,q(t))]dt \qquad \mbox{for all} \ q\in H^1_T.
	\end{eqnarray*}
	It is standard  to prove (see \cite{MawWil}) that if $V: [0,T]\times \RR^3\to\RR$ and $W: [0,T]\times\RR^3\to\RR^3$ are $C^1$ functions with $W(0, \cdot)=W(T, \cdot)$, then
	$\mathcal F_*\in C^1(H^1_T, \RR)$, with
	\begin{eqnarray*}
		\mathcal F_*'(q)[\varphi] &=&
		\int_0^T(\mathcal E(t,q(t),q'(t))- \nabla_q V(t,q(t)))\cdot \varphi(t) dt\\  &+&
		\int_0^T  W(t,q(t))\cdot \varphi'(t) dt
	\end{eqnarray*}
	for every  $q, \varphi\in H^{1}_T,$ where $\mathcal E: [0,T]\times\RR^3\times\RR^3\to\RR^3$ is given by
	\begin{equation*}
		\mathcal E(t,q,p) = (p\cdot D_{q_1}W(t,q), p\cdot D_{q_2}W(t,q),p\cdot D_{q_3}W(t,q)).
	\end{equation*}
	
	\subsection{The action functional}
	The action functional on $H^1_T$ (see \cite{ABT} for $W^{1,\infty}_T$ case) associated to the LFE with the electric potential $V,$  the magnetic potential $W$, and periodic boundary conditions on $[0,T]$ is given by
	$$\mathcal I_*:H^1_T\to (-\infty, +\infty], \quad \mathcal I_* = \Psi_* + \mathcal F_*.$$
	One has that $\mathcal I_*$ satisfies $(H),$ hence a point $q\in H^1_T$ is {\bf a critical point} of $\mathcal I_*$ if $q\in D(\Psi_*)$ and 
	\begin{eqnarray*}
		\Psi_*(\varphi) - \Psi_*(q) + \mathcal F'_*(q)[\varphi - q]\geq 0 \quad \mbox{for all} \ \varphi\in D(\Psi_*),
	\end{eqnarray*}
	or, equivalently,
	\begin{multline*}
		\int_0^T[\sqrt{1-|q'|^2} - \sqrt{1 - |\varphi'|^2}]dt +\int_0^T [\mathcal E(t,q,q')- \nabla_qV(t,q)] \cdot (\varphi -q) dt
		\\+
		\int_0^T  W(t,q)\cdot (\varphi' - q')dt \geq 0 \quad\mbox{for all} \ \varphi\in D(\Psi_*).
	\end{multline*}
	We need the following important result (see Lemma 14 in \cite{mawsurvey}). For $q\in L^1(0,T;\RR^3)$, we denote 
	$$\overline{q}=\frac{1}{T}\int_0^Tq(t)dt.$$
	\begin{Lem}\label{bremawlem}
		For every $f\in L^1(0,T;\RR^3)$, there exists a unique  $q_f\in W^{2,1}(0,T;\RR^3)$ such that $\|q'_f\|_{\infty}<1$  and 
		\begin{eqnarray*}
			\left(\frac{q'_f}{\sqrt{1-|q'_f|^2}}\right)'  = \overline{q_f} + f, \quad q_f(0)=q_f(T), \ q'_f(0)=q'_f(T).
		\end{eqnarray*}
		Moreover, $q_f$ is the unique solution $q\in D(\Psi_*)$ of the variational inequality
		\begin{eqnarray*}
			\int_0^T[\sqrt{1-|q'|^2} - \sqrt{1 - |\varphi'|^2}]dt + T\overline{q}\cdot (\overline{\varphi} - \overline{q}) + \int_0^T f\cdot (\varphi - q)dt\geq 0\\ \mbox{for all} \ \varphi\in D(\Psi_*).
		\end{eqnarray*}
	\end{Lem}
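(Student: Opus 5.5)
The plan is to build the solution explicitly instead of going through a minimization argument. After that, we check that it satisfies the variational inequality, and finally prove that the inequality has at most one solution in $D(\Psi_*)$. Since every classical solution satisfies the inequality (this is the computation of the second step), uniqueness of the boundary value problem follows as well.

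\emph{Construction.} Write $\phi(v)=v/\sqrt{1-|v|^2}$ on the open unit ball; its inverse is $\phi^{-1}(w)=w/\sqrt{1+|w|^2}$, and $|\phi^{-1}(w)|<1$. Integrating the equation over $[0,T]$ and using $q'(0)=q'(T)$ forces $T\overline{q}+\int_0^Tf=0$, so necessarily $\overline{q}=-\overline{f}$. Put $h(t)=\int_0^t(f-\overline{f})\,ds$; it is absolutely continuous with $h(0)=h(T)=0$. We then look for $c\in\RR^3$ such that
$$\int_0^T\phi^{-1}(c+h(t))\,dt=0 .$$
This is the critical point equation for $G(c)=\int_0^T\sqrt{1+|c+h(t)|^2}\,dt$, because $\nabla_c\sqrt{1+|w|^2}=\phi^{-1}(w)$.
\begin{itemize}
\item $G$ is $C^1$.
\item $G$ is strictly convex, since $w\mapsto\sqrt{1+|w|^2}$ is strictly convex.
\item $G$ is coercive, since $G(c)\ge T|c|-\int_0^T|h|$.
\end{itemize}
Hence $G$ has a unique minimizer $c$, and $\nabla G(c)=0$ gives the required identity. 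Now define
$$q_f(t)=-\overline{f}+\int_0^t\phi^{-1}(c+h)\,ds-\frac1T\int_0^T\!\!\int_0^\tau\phi^{-1}(c+h)\,ds\,d\tau .$$
By the choice of $c$, $q_f$ is $T$-periodic. Its mean is $-\overline{f}$. Its derivative $q_f'=\phi^{-1}(c+h)$ is continuous, so $\|q_f'\|_\infty<1$. Moreover $\phi(q_f')=c+h$ is absolutely continuous, with $(\phi(q_f'))'=f-\overline{f}=f+\overline{q_f}$. Because $h(0)=h(T)$, we get $q_f'(0)=q_f'(T)$. Finally, $q_f'$ is a smooth function of the $W^{1,1}$ function $c+h$ with values in a ball, so $q_f\in W^{2,1}$.

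\emph{Variational inequality.} Let $q$ be any solution of the periodic problem with $\|q'\|_\infty<1$. The function $a\mapsto 1-\sqrt{1-|a|^2}$ is convex on the closed unit ball, and at points with $|a|<1$ its gradient is $\phi(a)$. Applying the subgradient inequality pointwise with $a=q'(t)$ and $b=\varphi'(t)$ gives
$$\sqrt{1-|q'|^2}-\sqrt{1-|\varphi'|^2}\ge \phi(q')\cdot(\varphi'-q').$$
Integrate this over $[0,T]$. On the right, integrate by parts using periodicity and the equation, which yields $-\int_0^T(\overline{q}+f)\cdot(\varphi-q)\,dt$. The identity $\int_0^T\overline{q}\cdot(\varphi-q)\,dt=T\overline{q}\cdot(\overline{\varphi}-\overline{q})$ then gives exactly the variational inequality. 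In particular $q_f$ satisfies it.

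\emph{Uniqueness for the variational inequality.} Let $q_1,q_2\in D(\Psi_*)$ both satisfy the inequality. Test the inequality for $q_1$ with $\varphi=q_2$, test the one for $q_2$ with $\varphi=q_1$, and add. The $\sqrt{\cdot}$ terms and the $f$ terms cancel, leaving $-T|\overline{q_1}-\overline{q_2}|^2\ge0$, so $\overline{q_1}=\overline{q_2}$.

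For the derivatives, the inequality with the mean term fixed says that each $q_i$ minimizes the convex functional
$$\Psi_*(\varphi)+\tfrac T2|\overline{\varphi}|^2+\int_0^T f\cdot\varphi\,dt$$
over $D(\Psi_*)$. This is the sufficiency direction of convex optimality: the inequality states that $-\mathcal G'(q)\in\partial\Psi_*(q)$ for the smooth part $\mathcal G$. The two minimizers have the same minimum value, so by convexity the midpoint $\frac{q_1+q_2}{2}$ is also a minimizer. Strict convexity of $a\mapsto1-\sqrt{1-|a|^2}$ on the closed ball then forces $q_1'=q_2'$ a.e. Together with equal means this gives $q_1=q_2$.

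The step I expect to need the most care is the regularity bookkeeping in the construction. We must check that $q_f\in W^{2,1}$ with $q_f'$ genuinely continuous, so that $\|q_f'\|_\infty<1$ holds strictly and not merely a.e. This follows because $h$ is continuous and $\phi^{-1}$ maps $\RR^3$ into the open unit ball. We must also justify the integration by parts with only $\phi(q')\in W^{1,1}$ and $\varphi\in W^{1,\infty}$; this is standard for a product of absolutely continuous functions.
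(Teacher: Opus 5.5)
Your proof is correct and complete. The paper does not prove this lemma itself; it quotes it from Lemma 14 of Mawhin's survey, in the line of Brezis--Mawhin and Bereanu--Jebelean--Mawhin. Your route is the standard one for it: $\overline{q}=-\overline{f}$, $\phi(q')=c+h$ with $c$ fixed by $\int_0^T\phi^{-1}(c+h)\,dt=0$, the variational inequality from the pointwise convexity inequality plus integration by parts, and uniqueness by adding the two tested inequalities and using strict convexity of $a\mapsto 1-\sqrt{1-|a|^2}$ on the closed ball. Your minimization of the strictly convex coercive potential $G$ is simply the variational form of the usual strict-monotonicity argument for $c\mapsto\int_0^T\phi^{-1}(c+h)\,dt$.
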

	
	Using Lemma \ref{bremawlem} and the same strategy as in the proof of Proposition 1 in \cite{bjmrlma} (see also Proposition 1 in \cite{mawsurvey} and Theorem 6 in \cite{ABT}), we have the following key result.
	
	\begin{The}\label{criticsol}
		A function $q\in H^1_T$ is a $T$-periodic solution of the LFE with the electric potential $V$ and the magnetic potential $W$ if and only if $q$ is a critical point of the action functional $\mathcal I_*.$
	\end{The}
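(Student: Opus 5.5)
The proof has two directions, and in both the bridge is Lemma \ref{bremawlem} applied with a carefully chosen $f\in L^1$, following Proposition 1 in \cite{bjmrlma}.

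\emph{Solution $\Rightarrow$ critical point.} Let $q$ be a $T$-periodic solution. Then $q\in C^2$ and $|q'|<1$, so $q\in D(\Psi_*)$. Take any $\varphi\in D(\Psi_*)$. Multiply the LFE by $\varphi-q$ and integrate by parts; the boundary terms vanish by periodicity. This gives
\[
-\int_0^T \frac{q'\cdot(\varphi'-q')}{\sqrt{1-|q'|^2}}\,dt=\int_0^T \big(E(t,q)+q'\times B(t,q)\big)\cdot(\varphi-q)\,dt .
\]
Next, I will check pointwise that $-\nabla_qV-\partial_tW+q'\times \mathrm{curl}\,W=\mathcal E(t,q,q')-\nabla_qV-\frac{d}{dt}W(t,q)$. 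This uses the identity $q'\times\mathrm{curl}\,W=\nabla_q(q'\cdot W)-(q'\cdot\nabla_q)W$ together with $\frac{d}{dt}W(t,q)=\partial_tW+(q'\cdot\nabla_q)W$. Integrating the term $-\frac{d}{dt}W\cdot(\varphi-q)$ by parts turns it into $\int_0^T W\cdot(\varphi'-q')\,dt$.

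Finally, I will use the convexity inequality
\[
\sqrt{1-|q'|^2}-\sqrt{1-|\varphi'|^2}\ \ge\ \frac{q'\cdot(\varphi'-q')}{\sqrt{1-|q'|^2}},
\]
which is the gradient inequality for the convex function $p\mapsto-\sqrt{1-|p|^2}$ at $|q'|<1$. Combining these three pieces yields exactly the variational inequality defining a critical point.

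\emph{Critical point $\Rightarrow$ solution.} Let $q\in D(\Psi_*)$ be critical. Since $q$ is Lipschitz and $V,W$ are $C^1$, the function
\[
f:=\mathcal E(t,q,q')-\nabla_qV(t,q)-\frac{d}{dt}W(t,q(t))-\overline q
\]
lies in $L^1$. The derivative $\frac{d}{dt}W(t,q(t))$ exists a.e. and is bounded, and $W(0,\cdot)=W(T,\cdot)$ keeps the boundary terms periodic. Integrating by parts in the $W$-term (valid for Lipschitz functions), the critical-point inequality becomes
\[
\int_0^T\Big[\sqrt{1-|q'|^2}-\sqrt{1-|\varphi'|^2}\Big]dt+\int_0^T(\overline q+f)\cdot(\varphi-q)\,dt\ge 0 .
\]
Since $\int_0^T\overline q\cdot(\varphi-q)\,dt=T\overline q\cdot(\overline\varphi-\overline q)$, this is precisely the variational inequality of Lemma \ref{bremawlem}.

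By the uniqueness part of Lemma \ref{bremawlem}, $q=q_f$. Hence $q\in W^{2,1}$, $\|q'\|_\infty<1$, and
\[
\left(\frac{q'}{\sqrt{1-|q'|^2}}\right)'=\mathcal E-\nabla_qV-\frac{d}{dt}W
\]
holds a.e., with $q(0)=q(T)$ and $q'(0)=q'(T)$. Reversing the vector identity above shows the right-hand side equals $E(t,q)+q'\times B(t,q)$.

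\emph{Regularity bootstrap.} That right-hand side is continuous, because $q\in C^1$ once $q\in W^{2,1}$. So $q'/\sqrt{1-|q'|^2}$ is $C^1$. Inverting the homeomorphism $p\mapsto p/\sqrt{1-|p|^2}$, which is a diffeomorphism from the unit ball onto $\RR^3$, gives $q'\in C^1$, i.e. $q\in C^2$. Thus $q$ is a classical $T$-periodic solution.

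\emph{Main obstacle.} The delicate step is the second direction: rewriting the magnetic terms so the inequality fits the exact form of Lemma \ref{bremawlem}. This requires justifying the chain rule and the integration by parts for $W(t,q(t))$ with $q$ only Lipschitz. It also requires checking that the shifted $f$ (with $\overline q$ subtracted) makes the mean term match exactly.
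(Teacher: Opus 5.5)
Your proposal is correct and follows the route the paper indicates (Lemma \ref{bremawlem} plus the Brezis--Mawhin argument). You freeze the nonlinearity into a fixed $L^1$ forcing term, integrate the magnetic term by parts so the critical-point inequality becomes exactly the variational inequality of Lemma \ref{bremawlem}, and conclude by uniqueness; the converse uses the convexity of $p\mapsto -\sqrt{1-|p|^2}$. One small point: in the first direction, the integration by parts of $-\frac{d}{dt}W(t,q)\cdot(\varphi-q)$ also needs the standing assumption $W(0,\cdot)=W(T,\cdot)$ to kill the boundary term, not only in the second direction.
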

	
	In the following result we introduce the weak compactness property of the Poincar\'e action functional on $H^1_T.$ For the $W^{1,\infty}_T$ situation see Lemma 6 in \cite{ABT}.  
	
	\begin{Pro}\label{pslfe}
		Assume that any $(PS)$-sequence $(q_n)$ of $\mathcal I_*$ at the level $c\in\RR$ is such that $(\overline{q_n})$ is bounded. Then, $\mathcal I_*$ satisfies $(wPS)_c$-condition with respect to $C_T.$
	\end{Pro}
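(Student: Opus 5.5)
Given a $(PS)$-sequence $(q_n)\subset D(\Psi_*)$ at level $c$ with $\varepsilon_n\to 0$, the plan is to extract a $C_T$-convergent subsequence by compactness, then pass to the limit in the $(PS)$ inequality to get a critical point, and finally recover the level $c$.

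\textbf{Compactness.} Since $\|q_n'\|_\infty\le 1$, each oscillation $q_n-\overline{q_n}$ is bounded in $C_T$ by $T$. By hypothesis $(\overline{q_n})$ is bounded, so $(q_n)$ is bounded in $W^{1,\infty}_T$, hence in $H^1_T$. Arzel\`a--Ascoli then gives a subsequence with $q_n\to q$ in $C_T$. By Lemma \ref{propkpsi} we get $q\in D(\Psi_*)$, $q_n'\to q'$ weakly-$*$ in $L^\infty$, and $\Psi_*(q)\le\liminf\Psi_*(q_n)$.

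\textbf{Critical point.} Fix $\varphi\in D(\Psi_*)$ and use it in the $(PS)$ inequality
$$\Psi_*(\varphi)-\Psi_*(q_n)+\mathcal F_*'(q_n)[\varphi-q_n]\ge-\varepsilon_n\|\varphi-q_n\|_{1,2}.$$
The right-hand side tends to $0$, since $\|\varphi-q_n\|_{1,2}$ is bounded. In $\mathcal F_*'(q_n)[\varphi-q_n]$ we pass to the limit term by term:
\begin{itemize}
\item the term $\int\nabla_qV(q_n)\cdot(\varphi-q_n)$ converges by uniform convergence;
\item the term $\int W(q_n)\cdot(\varphi'-q_n')$ converges, because $W(q_n)\to W(q)$ uniformly (hence in $L^1$) and $q_n'\rightharpoonup q'$ weakly-$*$;
\item the magnetic term $\int\mathcal E(t,q_n,q_n')\cdot(\varphi-q_n)=\sum_i\int q_n'\cdot D_{q_i}W(q_n)(\varphi_i-q_{n,i})$ is a product of $q_n'$ (weakly-$*$ convergent) with a uniformly convergent continuous factor, so it converges.
\end{itemize}
Taking $\limsup$ and using $-\Psi_*(q)\ge\limsup(-\Psi_*(q_n))$ yields
$$\Psi_*(\varphi)-\Psi_*(q)+\mathcal F_*'(q)[\varphi-q]\ge 0,$$
so $q$ is critical. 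This is also where the structure of $W$ matters: the $W(t,\cdot)$ and $t$-dependence cause no trouble because all convergence is uniform in $t$.

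\textbf{The level (main obstacle).} We need $\mathcal I_*(q)=c$, which requires $\Psi_*(q_n)\to\Psi_*(q)$. Since $\Psi_*$ is only lower semicontinuous, this is the delicate step. Take $\varphi=q$ in the $(PS)$ inequality:
$$\Psi_*(q_n)\le\Psi_*(q)+\mathcal F_*'(q_n)[q-q_n]+\varepsilon_n\|q-q_n\|_{1,2}.$$
By the same term-by-term limits, with $\varphi=q$, the middle term tends to $0$; for example $\int W(q_n)\cdot(q'-q_n')\to0$ because the integrand pairs a strongly $L^1$-convergent factor with a weakly-$*$ null one. Hence $\limsup\Psi_*(q_n)\le\Psi_*(q)$, which together with lower semicontinuity gives $\Psi_*(q_n)\to\Psi_*(q)$. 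For the smooth part, $\mathcal F_*(q_n)=\int[q_n'\cdot W(q_n)-V(q_n)]\to\mathcal F_*(q)$ by the same weak-$*$ times strong argument. Therefore $\mathcal I_*(q)=\lim\mathcal I_*(q_n)=c$ and $\|q_n-q\|_\infty\to0$, which is the $(wPS)_c$-condition with $Y=C_T$.
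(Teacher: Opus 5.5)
Your proposal is correct and follows essentially the same route as the paper. You use Arzel\`a--Ascoli compactness from the bounded means and $\|q_n'\|_\infty\le 1$, Lemma \ref{propkpsi} for weak-$*$ convergence of derivatives and lower semicontinuity, term-by-term limits in $\mathcal F_*'(q_n)[\varphi-q_n]$ (uniform times weak-$*$), and finally $\varphi=q$ in the $(PS)$ inequality to get $\Psi_*(q_n)\to\Psi_*(q)$ and hence $\mathcal I_*(q)=c$; your explicit remark that $\varepsilon_n\|\varphi-q_n\|_{1,2}\to 0$ because $(q_n)$ is bounded in $H^1_T$ supplies a detail the paper leaves implicit.
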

	\proof Let $(q_n)\subset D(\Psi_*)$ be a $(PS)$-sequence of $\mathcal I_*$ at the level $c\in\RR.$  Using that $(\overline{q_n})$ is bounded, it follows that $(q_n)$ is bounded in $W^{1,\infty}_T$ and passing to a subsequence we can assume that $\|q-q_n\|_{\infty}\to 0$ as $n\to\infty$ for some $q\in C_T.$ From Lemma \ref{propkpsi} we deduce that $q\in D(\Psi_*),$ $\Psi_*(q)\leq \liminf\limits_{n\to\infty}\Psi_*(q_n)$, and $q'_n\to q'$ in the $w^*$-topology $\sigma(L^{\infty},L^1).$ It follows that
	$$\lim_{n\to\infty}\int_0^TV(t,q_n)dt=\int_0^TV(t,q)dt.$$
	Moreover, for any positive integer $n$, we have 
	\begin{eqnarray*}
		\left|\int_0^Tq'_n\cdot (W(t,q_n) - W(t,q))dt\right| &\leq& 
		\int_0^T|q'_n| \ |W(t,q_n) - W(t,q)|dt \\ &\leq& 
		\int_0^T |W(t,q_n) - W(t,q)|dt,
	\end{eqnarray*}
	which implies that
	$$\lim_{n\to\infty}\int_0^Tq'_n\cdot (W(t,q_n) - W(t,q))dt=0.$$
	Next, using that $W(\cdot,q)\in L^{\infty}(0,T;\RR^3)$ we deduce that
	$$\lim_{n\to\infty}\int_0^Tq'_n\cdot W(t,q)dt = \int_0^T q'\cdot W(t,q)dt,$$
	hence
	$$\lim_{n\to\infty}\int_0^Tq'_n\cdot W(t,q_n)dt = \int_0^Tq'\cdot W(t,q)dt$$
	and
	$$\lim_{n\to\infty}\mathcal F_*(q_n) = \mathcal F_*(q).$$
	Analogously, one has  
	\begin{eqnarray*}
		\lim_{n\to\infty}\int_0^T(\mathcal E(t,q_n,q'_n) - \nabla_qV(t,q_n))\cdot (\varphi - q_n)dt =\\
		\int_0^T(\mathcal E(t,q,q') - \nabla_qV(t,q))\cdot (\varphi - q)dt \quad \mbox{for all} \ \varphi\in D(\Psi_*),
	\end{eqnarray*}
	and we deduce that
	$$\lim_{n\to\infty}\mathcal F'_*(q_n)[\varphi - q_n] = \mathcal F'_*(q)[\varphi - q] \quad\mbox{for all} \ \varphi\in D(\Psi_*).$$
	We recall that $\mathcal I_*(q_n) \to c$ and we consider a sequence $(\varepsilon_n)\subset [0,\infty)$ having the property that  $\varepsilon_n\to 0$ and, for every positive integer $n$, one has 
	$$\Psi_*(\varphi) - \Psi_*(q_n) + \mathcal F'_*(q_n)[\varphi - q_n]\geq -\varepsilon_n \|\varphi - q_n\|_{1,2} \ \mbox{for all} \ \varphi\in D(\Psi_*).$$
	If we take $n\to\infty$, we deduce that $q$ is a critical point of $\mathcal I_*.$ Moreover, taking $\varphi=q$ it follows that
	$$\Psi_*(q) - \Psi_*(q_n) + \mathcal F'_*(q_n)[q - q_n]\geq -\varepsilon_n \|q - q_n\|_{1,2} \ \mbox{for all positive integers} \ n,$$
	and taking $n\to \infty$ it follows that $\Psi_*(q)=\lim\limits_{n\to \infty}\Psi_*(q_n),$ so $c=\mathcal I_*(q).$ The proof is completed. \cqfd
	
	\section{Main result}
	For convenience, we take our fixed period to be $T =2\pi.$ Let $L:S^1\to B(C_{2\pi})$ be given by
	$$(L(\theta)q)(t) = q(t+\theta) \quad \mbox{for all} \ \theta\in S^1, t\in\RR, q\in C_{2\pi}.$$
	One has that $L$ is an isometric representation of $S^1$ over the Banach space $C_{2\pi}.$ 
	Notice that 
	$$\operatorname{Fix}(S^1) = \RR^3\subset H^1_{2\pi}.$$
	Moreover, it is clear that $(L(\theta)|_{H^1_{2\pi}})_{\theta\in S^1}$ is an isometric representation of $S^1$ over the Hilbert space $H^1_{2\pi}.$ The nonsmooth part of the Poincar\'e action $\mathcal I_*,$ that is $\Psi_*,$ is invariant under the representation $L.$
	In this section we consider the Lorentz force equation with autonomous electric and magnetic potentials, so 
	$$V:\RR^3\to\RR, \quad W:\RR^3\to\RR^3.$$
	This implies that the smooth part of the Poincar\'e action $\mathcal I_*,$ that is $\mathcal F_*$, is invariant under the representation $L.$
	
	\begin{Lem}\label{ELconvlem}
		Assume that the conditions 
		\begin{itemize}
			\item   $(V_1)$ $V$ is of class $C^2$ on $\RR^3,$ $V(0)=0,$ $V> 0$ on $\RR^3\setminus \{0\},$ $V'\neq 0$ on $\RR^3\setminus \{0\}$, $V''$ is bounded on $\RR^3$, and there exists $l^*>0$ such that $\lim_{|q|\to\infty}V(q) = l^*.$
			\item $(W_1)$ \ $W$ is of class $C^2$ on $\RR^3$ and $W, W', W''$ are bounded on $\RR^3.$
		\end{itemize}
		are satisfied.
		Then, the action $\mathcal I_*$ satisfies the Ekeland-Lasry convexity condition $(*),$ that is, there exists $\alpha>0$ such that the function
		$$D(\Psi_*)\ni q\mapsto \mathcal I_*(q) + \alpha\|q\|_{1,2}^2\in \RR$$
		is convex.
	\end{Lem}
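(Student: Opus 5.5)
Since $\Psi_*$ is convex on $D(\Psi_*)$, and a sum of convex functions is convex, it suffices to find $\alpha>0$ such that $q\mapsto \mathcal F_*(q)+\alpha\|q\|_{1,2}^2$ is convex on the convex set $D(\Psi_*)$. The plan is to show that this function is convex along every segment in $D(\Psi_*)$. Take $q,h$ with $q,q+h\in D(\Psi_*)$ and set $g(s)=\mathcal F_*(q+sh)+\alpha s^2\|h\|^2_{1,2}+(\text{affine in } s)$. It is enough to prove $g''(s)\geq 0$ for $s\in[0,1]$. On $D(\Psi_*)$ everything is Lipschitz with $\|q'\|_\infty\le 1$, and $V,W\in C^2$, so $s\mapsto \mathcal F_*(q+sh)$ is $C^2$ and we may differentiate under the integral.

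For $u=q+sh$ we compute
$$\frac{d^2}{ds^2}\mathcal F_*(u)=\int_0^{T}\Big[2\,h'\cdot W'(u)h+u'\cdot W''(u)[h,h]-V''(u)[h,h]\Big]dt,$$
where the first term combines the two cross terms $h'\cdot W'(u)h$. The key point is that $u\in D(\Psi_*)$, so $|u'|\le 1$ a.e. Together with the boundedness of $W',W'',V''$ from $(V_1)$, $(W_1)$, this gives
$$\Big|\frac{d^2}{ds^2}\mathcal F_*(u)\Big|\le 2\|W'\|_\infty\|h'\|_{2}\|h\|_2+(\|W''\|_\infty+\|V''\|_\infty)\|h\|_2^2\le M\|h\|_{1,2}^2,$$
with $M$ independent of $q,h,s$. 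Choosing $\alpha> M/2$ then yields $g''(s)\ge (2\alpha-M)\|h\|_{1,2}^2\ge0$. Hence $\mathcal F_*+\alpha\|\cdot\|_{1,2}^2$ is convex on $D(\Psi_*)$, and adding $\Psi_*$ preserves convexity.

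The main obstacle is the term $u'\cdot W''(u)[h,h]$. On all of $H^1_T$ this term is not controlled by $\|h\|_{1,2}^2$ uniformly in $u$, so semiconvexity fails globally. This is exactly why $(*)$ is only required on $D(\Psi_*)$, where $|u'|\le1$ makes the bound uniform. A second point to check is the justification of the second derivative. Because $h\in W^{1,\infty}$, the map $s\mapsto W(u)$ is $C^2$ uniformly in $t$, so dominated convergence applies. Alternatively, one can avoid second derivatives by verifying the monotonicity inequality $(\mathcal F_*'(q_1)-\mathcal F_*'(q_2))[q_1-q_2]\ge -M\|q_1-q_2\|_{1,2}^2$ via the mean value theorem along the segment. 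Note that $V>0$, the limit at infinity, and $V'\ne0$ are not needed here; only the $C^2$ bounds are used.
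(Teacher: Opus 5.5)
Your proof is correct and takes essentially the same route as the paper: a second-derivative computation for $\mathcal F_*$ in which $|u'|\le 1$ on $D(\Psi_*)$, together with the bounds on $W',W'',V''$, gives a uniform estimate by $\|h\|_{1,2}^2$. The differences are cosmetic. The paper treats the $V$-term separately, via pointwise semiconvexity of $-V+\alpha_1|q|^2$ on $\RR^3$, and proves convexity of the $W$-term on the interior of $D(\Psi_*)$ before extending by continuity; you work directly along segments inside $D(\Psi_*)$, which makes that extension step unnecessary.
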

	\proof Using $(V_1)$, let
	$\alpha_1>0$ be such that the function 
	$$\RR^3\ni q\mapsto -V(q)+\alpha_1|q|^2\in\RR$$
	is convex. But, for any $q\in H^1_{2\pi}$, one has that
	\begin{eqnarray*}
		&-\int_0^{2\pi}V(q(t))dt+\alpha_1\|q\|_{1,2}^2=\int_0^{2\pi}(-V(q(t)))dt\\
		&+\alpha_1\int_0^{2\pi}(|q(t)|^2 + |q'(t)|^2)dt=\int_0^{2\pi}(-V(q(t))+\alpha_1|q(t)|^2)dt\\
		&+\alpha_1\int_0^{2\pi}|q'(t)|^2dt,
	\end{eqnarray*}
	and then we deduce that 
	$$H^1_{2\pi}\ni q\mapsto -\int_0^{2\pi}V(q(t))dt+\alpha_1\|q\|_{1,2}^2\in\RR$$
	is convex. Next, for a fixed  constant $\alpha_2>0$, we consider the function $\mathcal H_*:W^{1,\infty}_{2\pi}\to\RR$ given by
	$$\mathcal H_*(q) = \int_0^{2\pi}q'(t)\cdot W(q(t))dt + \alpha_2\|q\|_{1,2}^2 \quad\mbox{for all} \ q\in W^{1,\infty}_{2\pi}.$$
	One has that $\mathcal H_*\in C^2(W^{1,\infty}_{2\pi},\RR)$, with the second order derivative given by
	\begin{eqnarray*}
		\mathcal H''_*(q)[\varphi,\psi] &=& \int_0^{2\pi}q'(t)\cdot W''(q(t))[\varphi(t),\psi(t)]dt\\
		&+&\int_0^{2\pi}\psi'(t)\cdot W'(q(t))[\varphi(t)]dt\\
		&+&\int_0^{2\pi}\varphi'(t)\cdot W'(q(t))[\psi(t)]dt\\
		&+&2\alpha_2(\varphi|\psi)_{1,2} \ \mbox{for all} \ q, \varphi, \psi\in W^{1,\infty}_{2\pi},
	\end{eqnarray*}
	which implies that
	\begin{eqnarray*}
		\mathcal H''_*(q)[\varphi - q,\varphi - q] &=& \int_0^{2\pi}q'(t)\cdot W''(q(t))[\varphi(t) - q(t),\varphi(t) - q(t)]dt\\
		&+&2\int_0^{2\pi}(\varphi'(t) - q'(t))\cdot W'(q(t))[\varphi(t) - q(t)]dt\\
		&+&2\alpha_2\|\varphi - q\|^2_{1,2} \ \mbox{for all} \ q, \varphi\in W^{1,\infty}_{2\pi}.
	\end{eqnarray*}
	Using $(W_1)$, we consider two positive constants $c_1$ and $c_2$ such that $||W'(q)||\leq c_1$ and $||W''(q)||\leq c_2$ for all $q\in\RR^3.$ Then, for any $q,\varphi\in D(\Psi_*)$, one has
	$$\left|\int_0^{2\pi}q'(t)\cdot W''(q(t))[\varphi(t) - q(t),\varphi(t) - q(t)]dt\right|\leq c_2\|\varphi - q\|_{L^2}^2\leq c_2\|\varphi - q\|^2_{1,2},$$
	and, on the other hand
	\begin{eqnarray*}
		\left|\int_0^{2\pi}(\varphi'(t) - q'(t))\cdot W'(q(t))[\varphi(t) - q(t)]dt \right|&\leq& c_1\|\varphi' - q'\|_{L^2}\|\varphi - q\|_{L^2}\\
		&\leq&  c_1\|\varphi - q\|_{1.2}^2.
	\end{eqnarray*}
	It follows that for $\alpha_2>0$ sufficiently large one has 
	$$\mathcal H''_*(q)[\varphi - q,\varphi - q] \geq 0 \quad\mbox{for all} \ q, \varphi \in D(\Psi_*),$$
	which implies that $\mathcal H_*:\operatorname{int}(D(\Psi_*))\to\RR$ is convex. This together with the continuity of $\mathcal H_*$ implies the convexity of $\mathcal H_*$ on $D(\Psi_*)$.
	
	Thus, if $\alpha$ is sufficiently large, then the function
	$$D(\Psi_*)\ni q\mapsto \mathcal F_*(q)+\alpha \|q\|^2_{1,2}\in\RR$$
	is convex, which together with $\Psi_*$ being convex on $D(\Psi_*)$ implies the conclusion. \cqfd
	
	\begin{Lem}\label{boundps}
		If the conditions $(V_1)$ and $(W_1)$ are satisfied, then 
		the action $\mathcal I_*:H^1_{2\pi}\to (-\infty,+\infty]$  is bounded from below and satisfies $(wPS)_c$-condition with respect to $C_{2\pi}$ for any $\displaystyle c< -2\pi(l^*+\sup_{\RR^3}|W|).$
	\end{Lem}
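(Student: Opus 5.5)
Both claims rest on two pointwise bounds for $q\in D(\Psi_*)$. First, $\Psi_*(q)\ge 0$ because $1-\sqrt{1-|q'|^2}\ge 0$. Second, $|q'\cdot W(q)|\le \sup|W|$. Together with $V\ge 0$ from $(V_1)$ (since $V(0)=0$ and $V>0$ elsewhere), $V$ is bounded: it is continuous and has the finite limit $l^*$ at infinity. These give
$$\mathcal I_*(q)\ge -2\pi\sup_{\RR^3}|W| - 2\pi\sup_{\RR^3}V,$$
and $\mathcal I_*=+\infty$ off $D(\Psi_*)$. This settles boundedness from below.

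For the compactness, Proposition \ref{pslfe} reduces everything to one fact: any $(PS)$-sequence $(q_n)$ at a level $c<-2\pi(l^*+\sup|W|)$ has $(\overline{q_n})$ bounded. I argue by contradiction. Passing to a subsequence, suppose $|\overline{q_n}|\to\infty$. Since $\|q_n'\|_\infty\le 1$, we have $\|q_n-\overline{q_n}\|_\infty\le 2\pi$, so $\min_{[0,2\pi]}|q_n(t)|\to\infty$ uniformly in $t$. By $(V_1)$ this gives $V(q_n(t))\to l^*$ uniformly, and so $\int_0^{2\pi}V(q_n)\,dt\to 2\pi l^*$.

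It then remains to bound the other terms:
$$\mathcal I_*(q_n)=\Psi_*(q_n)+\int_0^{2\pi}q_n'\cdot W(q_n)\,dt-\int_0^{2\pi}V(q_n)\,dt\ \ge\ 0-2\pi\sup|W|-\int_0^{2\pi}V(q_n)\,dt.$$
Letting $n\to\infty$ yields $c\ge -2\pi\sup|W|-2\pi l^*$, which contradicts the choice of $c$. Hence $(\overline{q_n})$ is bounded, and Proposition \ref{pslfe} gives $(wPS)_c$ with respect to $C_{2\pi}$.

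This argument never uses the $(PS)$ inequality, only the level condition. The only delicate point is the uniform divergence of $|q_n(t)|$, which the Lipschitz bound handles. So I do not expect a genuine obstacle.
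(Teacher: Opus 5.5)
Your proof is correct and follows essentially the same route as the paper's proof. You bound $\mathcal I_*$ below using $\Psi_*\ge 0$, $|q'\cdot W(q)|\le\sup_{\RR^3}|W|$ and the boundedness of $V$. You then obtain $(wPS)_c$ by contradiction: $\|q_n-\overline{q_n}\|_\infty\le 2\pi$ forces $\int_0^{2\pi}V(q_n)\,dt\to 2\pi l^*$, which gives $c\ge -2\pi(l^*+\sup_{\RR^3}|W|)$, and Proposition \ref{pslfe} finishes the argument.
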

	\proof First of all, notice that from $(V_1)$  it follows that $V$ is bounded. Moreover, from $(W_1)$ it follows that for any $q\in D(\Psi_*)$, we have  
	$$\left|\int_0^{2\pi}q'(t)\cdot W(q(t))dt\right|\leq 2\pi \sup_{\RR^3}|W|<\infty,$$
	so $\mathcal I_*$ is bounded from below on $H^1_{2\pi}.$ Next, consider $c< -2\pi(l^*+\sup_{\RR^3}|W|),$ and let $(q_n)\subset D(\Psi_*)$ be a $(PS)$-sequence at the level $c.$ Assume that $(\overline{q_n})$ is not bounded, and passing to a subsequence assume that $|\overline{q_n}|\to\infty.$ Consider
	the decomposition $q_n=\overline{q_n} + {\tilde q_n}$ and notice that $\|{\tilde q_n}\|_{\infty}\leq 2\pi$ for any $n\in\mathbb{N}.$ Thus, using $(V_1)$ we deduce that
	$$\int_0^{2\pi}V(q_n(t))dt\to 2\pi l^* \ \mbox{as} \ n\to \infty.$$
On the other hand, for all $n\in\mathbb{N},$ we have that
\begin{eqnarray*}
\mathcal I_*(q_n) &=& \int_0^{2\pi}[1-\sqrt{1-|q_n'|^2}]dt-\int_0^{2\pi}V(q_n)dt + \int_0^{2\pi}q_n'\cdot W(q_n)dt\\
		&\geq& -2\pi\sup_{\RR^3}|W| -\int_0^{2\pi}V(q_n)dt
\end{eqnarray*}
Letting $n\to\infty$ it follows that $c\geq  -2\pi(l^*+\sup_{\RR^3}|W|),$ a contradiction.
 Hence, $(\overline{q_n})$ is bounded
	and the conclusion follows using Proposition \ref{pslfe}.\cqfd
	
	\begin{Lem}\label{lemaroundzero}
		Assume additionally that the following condition holds true.
		\begin{itemize}
			\item $(V_2)$ There exist  $\lambda>0,$ and $r_{0}>0$ such that $V(q)\geq\lambda |q|^2$ for all $q\in\RR^3$ with $|q|\leq r_{0}.$
		\end{itemize}
			%\item $(W_2)$ There exist $\nu>2,$ $s_{0}>0,$ and $\delta>0$ such that $|W(q)|\leq\delta |q|^{\nu}$   
			%for all $q\in\RR^3$ with $|q|\leq s_{0}.$
		Consider $m\in\mathbb{N}$ and 
		$$Z_m=\left\{\sum_{j=1}^m [\cos(jt)a_j + \sin(jt)b_j]: a_j, b_j\in\RR^3, 1\leq j\leq m\right\},$$
		and $$D=\{q\in Z_m: \|q\|_{1,\infty}<r\},$$ where $r<\min(r_{0}, 1)$. Then, $Z_m$ is a $(6m)$-dimensional  invariant subspace of $H^1_{2\pi}$ such that $Z_m\cap\operatorname{Fix}(S^1)=\{0\},$ and $D$ is an open bounded invariant neighbourhood of $0$ in $Z_m.$ Moreover, there exists $\Lambda_m(l^*, W)>0$ such that if $\lambda\geq\Lambda_m,$ then 
$$\sup_{\partial D}\mathcal I_*< -2\pi(l^*+\sup_{\RR^3}|W|).$$
	\end{Lem}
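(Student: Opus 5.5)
The structural claims are routine, so the plan is to dispose of them first and spend the effort on the upper bound on $\partial D$.

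\emph{Structure of $Z_m$ and $D$.} The trigonometric polynomials $\cos(jt)a_j+\sin(jt)b_j$ with $a_j,b_j\in\RR^3$, $1\le j\le m$, are linearly independent in $H^1_{2\pi}$, so $\dim Z_m=6m$. Invariance follows from the identities $\cos(j(t+\theta))=\cos(j\theta)\cos(jt)-\sin(j\theta)\sin(jt)$ and the analogous one for $\sin$: the shift $L(\theta)$ maps the pair $(a_j,b_j)$ to a rotated pair. An element of $Z_m$ has zero mean, while $\operatorname{Fix}(S^1)=\RR^3$ consists of constants, so $Z_m\cap\operatorname{Fix}(S^1)=\{0\}$. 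Both $\|\cdot\|_\infty$ and $\|(\cdot)'\|_\infty$ are shift invariant, hence $D$ is invariant. Since $\|\cdot\|_{1,\infty}$ is a norm on the finite dimensional space $Z_m$, $D$ is an open bounded neighbourhood of $0$ there.

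\emph{Upper bound on $\partial D$.} Fix $q\in\partial D$, so $\|q\|_{1,\infty}=r$. Because $r<1$ we have $\|q'\|_\infty\le r<1$, hence $q\in D(\Psi_*)$. Because $r<r_0$ we have $|q(t)|\le r<r_0$, so $(V_2)$ gives $\int_0^{2\pi}V(q)\,dt\ge\lambda\|q\|_{L^2}^2$. The remaining terms are bounded elementarily:
\[
1-\sqrt{1-s^2}\le s^2 \quad (0\le s\le 1), \qquad \text{so} \qquad \Psi_*(q)\le\|q'\|_{L^2}^2,
\]
and
\[
\Bigl|\int_0^{2\pi}q'\cdot W(q)\,dt\Bigr|\le 2\pi\sup_{\RR^3}|W|.
\]
Combining these,
\[
\mathcal I_*(q)\le \|q'\|_{L^2}^2-\lambda\|q\|_{L^2}^2+2\pi\sup|W|.
\]
On $Z_m$ every frequency is at most $m$, so $\|q'\|_{L^2}^2\le m^2\|q\|_{L^2}^2$.

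\emph{Main obstacle: a uniform lower bound on $\|q\|_{L^2}$ over $\partial D$.} The remaining difficulty is bounding $\|q\|_{L^2}$ from below uniformly on $\partial D$. The set $\partial D=\{q\in Z_m:\|q\|_{1,\infty}=r\}$ is compact in the finite dimensional space $Z_m$ and does not contain $0$, so by equivalence of norms there is $\kappa_m>0$ with $\|q\|_{L^2}^2\ge\kappa_m r^2$ for every $q\in\partial D$. For $\lambda\ge m^2$ this yields
\[
\mathcal I_*(q)\le-(\lambda-m^2)\kappa_m r^2+2\pi\sup|W|.
\]
I choose $\Lambda_m$ so that $(\Lambda_m-m^2)\kappa_m r^2>2\pi(l^*+2\sup|W|)$. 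Then for $\lambda\ge\Lambda_m$ we get $\sup_{\partial D}\mathcal I_*<-2\pi(l^*+\sup|W|)$. The supremum is a maximum: on finite dimensional $Z_m$ with $\|q'\|_\infty\le r<1$, $\mathcal I_*$ is continuous and $\partial D$ is compact, so the strict inequality is preserved.

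One subtlety remains: $r$ is fixed, and $\lambda$ is tied to $r_0$, which could shrink as $\lambda$ grows. I treat $r_0$ and $r$ as fixed before choosing $\Lambda_m$, so $\Lambda_m$ depends on $m$, $r$, $l^*$ and $\sup|W|$. This is consistent with the statement's notation $\Lambda_m(l^*,W)$, with the dependence on $r$ suppressed.
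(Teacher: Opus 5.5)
Your proof is correct and follows the paper's argument essentially step by step. You use the same four estimates ($1-\sqrt{1-s^2}\le s^2$, $V(q)\ge\lambda|q|^2$ for $|q|\le r<r_0$, $\bigl|\int_0^{2\pi}q'\cdot W(q)\,dt\bigr|\le 2\pi\sup_{\RR^3}|W|$, and $\|q'\|_{L^2}^2\le m^2\|q\|_{L^2}^2$ on $Z_m$), combined with a finite-dimensional norm-equivalence constant bounding $\|q\|_{L^2}$ from below on $\partial D$ (the paper's $\gamma_m$, your $\kappa_m$).

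The dependence of $\Lambda_m$ on $r$ that you flag is also present in the paper's choice, through the factor $\gamma_m r^2$, though the paper does not state it. It cannot be removed: for fixed $V$ and $W$, $\mathcal I_*\to 0$ uniformly on $\partial D$ as $r\to 0$, so the conclusion fails for small $r$ whatever $\lambda$ is.
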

	\proof One has that $Z_m$ is invariant due to the  trigonometric formulas
\begin{eqnarray*}
\cos(x+y)=\cos(x)\cos(y)-\sin(x)\sin(y), \\
 \sin(x+y)=\sin(x)\cos(y)+\cos(x)\sin(y).
\end{eqnarray*}
	Thus, all we have to prove is that $\mathcal I_*(q)<-2\pi(l^*+\sup_{\RR^3}|W|)$ for all $q\in\partial D$ and for all $\lambda$ large enough. Using that $r\leq 1,$ one has $\overline{D}\subset D(\Psi_*),$ and using that
	\begin{eqnarray*}
		1-\sqrt{1-s^2}\leq s^2 \quad\mbox{for all} \ s\in [0,1],
	\end{eqnarray*}
	and 
	\begin{eqnarray*}
		m^2\int_0^{2\pi}|q(t)|^2dt\geq \int_0^{2\pi}|q'(t)|^2dt \quad\mbox{for all} \ q\in Z_m,
	\end{eqnarray*}
	from $(V_2)$ and $(W_1)$ we deduce that for all $q\in \overline{D},$
	\begin{eqnarray*}
		\mathcal I_*(q) &=& \int_0^{2\pi}[1-\sqrt{1-|q'(t)|^2}]dt-\int_0^{2\pi}V(q(t))dt + \int_0^{2\pi}q'(t)\cdot W(q(t))dt\\
		&\leq& (m^2 - \lambda)\int_0^{2\pi}|q(t)|^2dt + 2\pi \sup_{\RR^3} |W|.
	\end{eqnarray*}
	Let $\gamma_m>0$ be such that
	$$||q||_{L^{2}}\geq \gamma_m ||q||_{1,\infty} \quad\mbox{for all} \ q\in Z_m.$$
	It follows that for all $q\in\partial D,$ one has that
	\begin{eqnarray*}
		\mathcal I_*(q)\leq (m^2 - \lambda)2\pi \gamma_mr^2 + 2\pi\sup_{\RR^3}|W|,
	\end{eqnarray*}
	and the conclusion follows.\cqfd

\begin{Rem}
{\em The constant $\Lambda_m$ quantifies the link between the behaviour of the electric potential $V$ around the origin and the behaviour of the electric potential $V$ at infinity together with the magnetic potential $W$.  }
\end{Rem}
	
	\begin{Lem}\label{lemfix}
		Consider the set 
		$$\Omega_*=\{q\in H^1_{2\pi}: \mathcal I_*(q)<0\}.$$
		Then, we have that
		$$\operatorname{Fix}(S^1)\cap \Omega_* \cap K(\mathcal I_*) = \emptyset.$$
	\end{Lem}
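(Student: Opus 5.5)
Suppose $q\in\operatorname{Fix}(S^1)\cap\Omega_*\cap K(\mathcal I_*)$; we aim for a contradiction. Since $\operatorname{Fix}(S^1)=\RR^3$, the point $q$ is a constant function, say $q\equiv a\in\RR^3$. Then $q'=0$, so $\Psi_*(q)=0$ and $\int_0^{2\pi}q'\cdot W(q)\,dt=0$. Hence
$$\mathcal I_*(q)=-2\pi V(a).$$
Since $q\in\Omega_*$ we have $\mathcal I_*(q)<0$, i.e. $V(a)>0$. By $(V_1)$, $V(0)=0$, so this forces $a\neq 0$.

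Next we use that $q$ is a critical point. By Theorem \ref{criticsol}, $q$ is a $2\pi$-periodic solution of the LFE with autonomous potentials. For $q\equiv a$ the left-hand side of the equation vanishes, and so does $q'\times B$. The electric field is $E=-\nabla V-\partial_t W=-\nabla V(a)$ because $W$ does not depend on $t$. The equation therefore reduces to $\nabla V(a)=0$.

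One can also see this directly from the variational inequality, avoiding Theorem \ref{criticsol}. Take test functions $\varphi=a+s\,e$ with $e\in\RR^3$ and $s\in\RR$; these are constants, so $\varphi'=0$ and $\varphi\in D(\Psi_*)$. In the inequality the square-root terms cancel and the $W$-term vanishes, because both $\varphi'$ and $q'$ are zero. The $\mathcal E(t,q,q')$ term is also zero since $q'=0$. What remains is
$$-2\pi\, s\,\nabla V(a)\cdot e\ge 0 \quad \mbox{for all } s\in\RR,\ e\in\RR^3,$$
which gives $\nabla V(a)=0$.

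This contradicts $(V_1)$, which requires $V'\neq0$ on $\RR^3\setminus\{0\}$, since we showed $a\neq 0$. Hence $\operatorname{Fix}(S^1)\cap\Omega_*\cap K(\mathcal I_*)=\emptyset$. The only step needing care is the direct verification that the $W$-terms drop out for constant test functions, and this is immediate from the formula for $\mathcal F_*'$.
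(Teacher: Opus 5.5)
Your proof is correct and follows the paper's argument: a constant critical point is, by Theorem~\ref{criticsol}, a solution of the LFE with $\nabla V(a)=0$, which contradicts $(V_1)$. You also make explicit the step $a\neq 0$, which follows from $\mathcal I_*(a)=-2\pi V(a)<0$ and $V(0)=0$ and which the paper leaves implicit; your direct check with constant test functions in the variational inequality is a valid self-contained alternative to invoking Theorem~\ref{criticsol}.
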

	\proof Assume that there exists $q\in\RR^3=\operatorname{Fix}(S^1)$ such that $q$ is a critical point of $\mathcal I_*$ and $\mathcal I_*(q)<0.$
	From Theorem \ref{criticsol} it follows that $q$ is a solution of the Lorentz force equation, that is $V'(q)=0,$ a contradiction. The proof is completed.\cqfd
	
	Our main application goes as follows.
	
	\begin{The}\label{theappl}
		Suppose that the assumptions $(V_{1, 2})$  and $(W_{1})$ hold true. For any $m\in\mathbb{N}$ there exists $\Lambda_m>0$ such that if $\lambda$ in $(V_2)$ satisfies $\lambda\geq\Lambda_m,$ then $\mathcal I_*$ has at least $3m$ critical orbits at negative levels, which are $2\pi$-periodic solutions of the Lorentz force equation.
	\end{The}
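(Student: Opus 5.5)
The plan is to apply Corollary \ref{maincorabstract} with $X=H^1_{2\pi}$, $Y=C_{2\pi}$, $\mathcal I=\mathcal I_*$, $\Psi=\Psi_*$, $\mathcal F=\mathcal F_*$, $Z=Z_m$ (so $2k=6m$, i.e. $k=3m$) and $D$ as in Lemma \ref{lemaroundzero}. The abstract setting is already in place. The injection $H^1_{2\pi}\subset C_{2\pi}$ is compact. $L$ is an isometric representation on $C_{2\pi}$ that leaves $H^1_{2\pi}$ invariant and restricts to an isometric representation there, and $\operatorname{Fix}(S^1)=\RR^3\subset H^1_{2\pi}$. The functional $\mathcal I_*$ satisfies $(H)$, and both $\Psi_*$ and $\mathcal F_*$ are invariant because $V,W$ are autonomous. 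We also have $\mathcal I_*(0)=\Psi_*(0)+\mathcal F_*(0)=0-2\pi V(0)+0=0$ by $(V_1)$.

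Next, verify the remaining hypotheses of the corollary, one lemma at a time.
\begin{itemize}
\item Boundedness from below and the $(wPS)_d$ condition for all $d<\omega:=-2\pi(l^*+\sup_{\RR^3}|W|)<0$ come from Lemma \ref{boundps}.
\item The Ekeland--Lasry condition $(*)$ is Lemma \ref{ELconvlem}. Outside $D(\Psi_*)$ the functional is $+\infty$, so convexity on $D(\Psi_*)$ is the required statement.
\item The condition $\operatorname{Fix}(S^1)\cap\Omega_*\cap K(\mathcal I_*)=\emptyset$ is Lemma \ref{lemfix}.
\item By Lemma \ref{lemaroundzero}, $Z_m$ is a $6m$-dimensional invariant subspace with $Z_m\cap \operatorname{Fix}(S^1)=\{0\}$, and $D$ is an open bounded invariant neighbourhood of $0$ in $Z_m$. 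The same lemma gives $\Lambda_m$ such that $\lambda\ge\Lambda_m$ implies $\sup_{\partial D}\mathcal I_*<\omega$; this $\Lambda_m$ is the one in the statement.
\end{itemize}

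Corollary \ref{maincorabstract} then gives at least $k=3m$ critical orbits of $\mathcal I_*$ at negative levels. By Theorem \ref{criticsol} (with $T=2\pi$; the periodicity requirement $W(0,\cdot)=W(T,\cdot)$ holds trivially in the autonomous case), each critical point is a $2\pi$-periodic solution of the Lorentz force equation.

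The main point needing care is the strictness $\sup_{\partial D}\mathcal I_*<\omega$ rather than merely $\le$. The estimate in Lemma \ref{lemaroundzero} is uniform over $\partial D$, so I would choose $\Lambda_m$ strictly large enough that $(m^2-\lambda)2\pi\gamma_m r^2+2\pi\sup|W|<\omega$. A second point is to confirm that the corollary's conclusion of "orbits at negative levels" yields distinct orbits. This holds because the last part of Theorem \ref{lsnonsmooths1} counts distinct orbits: either the levels $c_1(\mathcal I_\varepsilon)\le\dots\le c_{3m}(\mathcal I_\varepsilon)$ are distinct, or two of them coincide and infinitely many orbits exist. No further obstacle is expected, since all ingredients were established above.
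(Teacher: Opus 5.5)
Your proposal follows the paper's proof. The authors likewise feed Lemmas \ref{ELconvlem}, \ref{boundps}, \ref{lemaroundzero} and \ref{lemfix} into Corollary \ref{maincorabstract} with $Z=Z_m$, $k=3m$, $\omega=-2\pi(l^*+\sup_{\RR^3}|W|)$, and conclude with Theorem \ref{criticsol}.

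Be aware, though, that the step you flag as delicate can never actually occur, in the paper's argument as in yours. Under $(V_1)$ one has $V<l^*$ on $\RR^3$: a value $\geq l^*$ would give a global maximum of $V$ at some $q_0\neq 0$, where $V'(q_0)=0$. Together with $\Psi_*\geq 0$ and $|\int_0^{2\pi}q'\cdot W(q)\,dt|\leq 2\pi\sup_{\RR^3}|W|$, this gives $\mathcal I_*>\omega$ on all of $D(\Psi_*)$, and $(V_2)$ gives $\lambda r_0^2<l^*$. Hence $\sup_{\partial D}\mathcal I_*<\omega$ never holds, no admissible $\lambda$ satisfies $\lambda\geq\Lambda_m$, and the theorem is true only vacuously.
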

	\proof The result follows from  Theorem \ref{criticsol},  Corollary \ref{maincorabstract}, and Lemmas \ref{ELconvlem}, \ref{boundps}, \ref{lemaroundzero}, \ref{lemfix}.
\cqfd

\begin{Exa}
{\em 
For any $\lambda>0$ one has that $$\lim_{x\to 0}x^{-2}\arctan (\lambda x^2)=\lambda, \quad \lim_{|x|\to\infty}\arctan (\lambda x^2)=\frac{\pi}{2}.$$
Consider the electric potential  $V$ given by
$$V:\RR^3\to\RR, \qquad V(q)=\arctan (\lambda |q|^2).$$
One has that $V$ satisfies $(V_{1, 2})$ with $l^*=\frac{\pi}{2}$ and $V(q)\geq \frac{\lambda}{2}|q|^2$ around  zero. Assume also that the magnetic potential $W$ satisfies $(W_{1}).$ Let $m\in\mathbb{N}$ be fixed. From the above theorem it follows that there exists $\Lambda_m>0$ such that the Lorentz force equation has at least $3m$ $2\pi$-periodic solutions for any $\lambda$ such that $\frac{\lambda}{2}\geq \Lambda_m.$ In particular, if $\lambda\to\infty,$ then the number of  $2\pi$-periodic solutions of the Lorentz force equation goes to infinity.}
\end{Exa}

\begin{Rem}
{\em It is well known that without the normalization we used in this paper the relativistically correct equation of the motion of a charged particle is
\begin{eqnarray*}
\left(\frac{q'}{\sqrt{1-|q'|^2/c^2}}\right)' =\frac{\beta}{m_0}(E(t,q)+q'\times B(t,q)),
\end{eqnarray*}
where $c$ is the speed of light, $m_0$ is the rest mass, and $\beta\in \RR$ is the charge of the  particle. Notice that $m_0$ and $\beta$ are prescribed. Thus, to apply  our result, if we want a positively charged particle, our electric potential $V$ must be attractive, i.e. $V>0,$ and if we want a negatively charged particle, our electric potential $V$ must be repulsive, i.e. $V<0$.
}
\end{Rem}

\begin{flushright}
\begin{tabular}{@{}l@{}}
		University of Bucharest\\
		Faculty of Mathematics\\
		14 Academiei Street\\
		70109 Bucharest, Romania\\
		and\\
		Institute of Mathematics ``Simion Stoilow''\\
		Romanian Academy\\
		21 Calea Grivitei, Bucharest, Romania\\
		cbereanu@imar.ro\\
		\\
		Babe\c{s}-Bolyai University\\
		Faculty of Mathematics and Computer Science\\
		1 Mihail Kog\u{a}lniceanu \\
		400084 Cluj-Napoca, Romania\\
		alexandru.pirvuceanu@ubbcluj.ro
\end{tabular}
\end{flushright}
\end{document}